\newtheorem{lemma}{Lemma}[section]
\newtheorem{corollary}{Corollary}[section]
\newtheorem{thm}{Theorem}[section]
\newtheorem*{prth1.1}{Proof of Theorem 1.1}
\newtheorem*{prth1.2}{Proof of Theorem 1.2}
\newtheorem*{prth1.3}{Proof of Theorem 1.3}
\newtheorem*{prcor1.1}{Proof of Corollary 1.1}
\numberwithin{equation}{section}
\numberwithin{theorem}{section}
\newtheorem{lem}[thm]{Lemma}
\numberwithin{proposition}{section}
\numberwithin{lemma}{section}
\numberwithin{corollary}{section}
\numberwithin{remark}{section}
\newcommand{\Tmax}{T_{\rm max}}
\newcommand{\ep}{\varepsilon}
\newcommand{\RN}{\mathbb{R}^N}
\newcommand{\R}{\mathbb{R}}
\def\Xint#1{\mathchoice
    {\XXint\displaystyle\textstyle{#1}}%
    {\XXint\textstyle\scriptstyle{#1}}%
    {\XXint\scriptstyle\scriptscriptstyle{#1}}%
    {\XXint\scriptscriptstyle\scriptscriptstyle{#1}}%
    \!\int}
\def\XXint#1#2#3{\setbox0=\hbox{$#1{#2#3}{\int}$}
    \vcenter{\hbox{$#2#3$}}\kern-0.5\wd0}
\def\dashint{\Xint{\raise4pt\hbox to7pt{\hrulefill}}}
\begin{document}
\thispagestyle{empty}
\setcounter{page}{1}
\noindent
\begin{center}
{\bf { \Large Behavior in time of solutions of a Keller--Segel system with flux limitation and source term
}}

\begin{center}
    \vspace*{1cm} 
{\bf 
{ M.Marras \footnote{ Dipartimento di Matematica e Informatica, Universit\'a di Cagliari, via Ospedale 72, 09124 Cagliari (Italy), mmarras@unica.it},
S.Vernier-Piro \footnote{ Facolt\'a di Ingegneria e Architettura, Universit\'a di Cagliari, Viale Merello 92, 09123 Cagliari (Italy), svernier@unica.it},
 T.Yokota \footnote{Department of Mathematics, Tokyo University of Science, 1-3, Kagurazaka, Shinjuku-ku, Tokyo 162-8601 (Japan), yokota@rs.tus.ac.jp} }}\\
\end{center}

\end{center}


\begin{abstract}
In this paper we consider radially symmetric solutions of the following parabolic--elliptic cross-diffusion system
\begin{equation*}
\begin{cases}
u_t = \Delta u  -  \nabla \cdot (u  f(|\nabla v|^2 )\nabla v) + g(u), &  \\[2mm]
0= \Delta v -m(t)+  u ,  \quad \int_{\Omega}v \,dx=0, &  \\[2mm]
u(x,0)= u_0(x), &
\end{cases}
\end{equation*}
in $\Omega \times (0,\infty)$, with $\Omega$ a ball in $\RN$, $N\geq 3$, under homogeneous Neumann boundary conditions, where $g(u)= \lambda u - \mu u^k$ , $\lambda >0, \ \mu >0$, and $ k >1$, $f(|\nabla v|^2 )= k_f(1+ |\nabla v|^2)^{-\alpha}$, $\alpha>0$, which describes gradient-dependent limitation of cross diffusion fluxes. The function $m(t)$ is the time dependent spatial mean of $u(x,t)$  i.e.  $m(t) := \frac 1 {|\Omega|} \int_{\Omega} u(x,t)  \,dx$. Under smallness conditions on $\alpha$ and $k$,
we prove that the solution $u(x,t)$ blows up  in $L^{\infty}$-norm at finite time $T_{max}$ and for some $p>1$ it blows up also in $L^p$-norm.  In addition a lower bound of blow-up time is derived. Finally, 
under largeness conditions on $\alpha$ or $k$,
we prove that the solution is global and bounded in time.
\vskip.2truecm
\noindent{\bf AMS Subject Classification }{Primary: 35B44; Secondary: 35Q92, 92C17.}
\vskip.2truecm
\noindent{\bf Key Words:} finite-time blow-up; boundedness; chemotaxis.

\end{abstract}


\section{Introduction} \label{Intro}

Let us consider the chemotaxis system with flux limitation with source term,
\begin{equation} \label{sys}
\begin{cases}
u_t = \Delta u  -  \chi \nabla \cdot (u  f(|\nabla v|^2 )\nabla v) +g(u),   \quad & x\in \Omega, \ t>0,\\[2mm]
0= \Delta v -m(t) +  u ,  \qquad \qquad \qquad &x \in \Omega, \ t>0, \\[2mm]
\frac{\partial u}{\partial \nu}=\frac{\partial v}{\partial \nu}= 0, \quad \qquad \qquad \qquad &x \in \partial \Omega, \ t>0, \\[2mm]
u(x,0)= u_0(x), \qquad  \qquad &x \in \Omega,
 \end{cases}
\end{equation}
with $\Omega$ a ball in $\mathbb{R}^N$, $N\geq 3$, $m(t) =  \frac 1{ |\Omega|}\int u(x,t) \,dx >0$, $\int_{\Omega} v \,dx=0$, 

\begin{equation}\label{f}
f(|\nabla v|^2 )= k_f(1+ |\nabla v|^2)^{-\alpha}
\end{equation}
with some $k_f >0$ and $\alpha >0$, 
\begin{equation}
\label{g-logistic}
g(u)= \lambda u -\mu u^k 
\end{equation}
with $\lambda >0, \ \mu >0$, and $ k >1$, $u_0$ is a given nonnegative function. \\

The chemotaxis model \eqref{sys} with $g(u)=0$ and $f( |\nabla v|^2)=1$ is just the classical 
Keller--Segel system (see \cite{KS_1970}), which permits the concentration phenomena to result in the possible blowing up of solutions, and has been extensively studied since 1970s, such as the existence of global bounded solutions and the detection of some solutions blowing up in either finite or infinite time, in a great number of literature (see \cite{BBTW}, \cite{ChMT},  \cite{CMTY}, \cite{IY}, \cite{MTV}, \cite{MVP}, \cite{MOT}, \cite{Nagai}, \cite{NT} and the references therein).  \\

We refer that in the case $f( |\nabla v|^2)=1$, $\chi >0$  with $g(u)=\lambda u - \mu u^k$ , $\lambda \geq 0, \ \mu \geq 0$, and $1< k< \frac 32+ \frac 1{2n-2}$, $\Omega$ a ball in $\mathbb {R}^N,$ with $N\geq 5$, Winkler in \cite{W-2011} proved that there exist initial data such that the radially symmetric solution blows up in finite time. In \cite{W-2018}, with  $\Omega$ a ball in $\mathbb{R}^N, N\geq 3, \lambda \in \mathbb {R}, \mu>0, k>1,$
and with $m(t)$ replaced by the function $v(x,t)$ in the second equation, under the assumption 
\begin{equation*} \label{k_w2018}
k < \begin{cases}
 \frac 7 6, \quad \quad \qquad  \  \ {\rm if} \ \  N\in \{3,4\}, \\[6pt]
1 + \frac 1 {2(N-1)},  \ \ {\rm if} \ \ N\geq 5, 
 \end{cases}
\end{equation*}
the author derived a condition on the initial data sufficient to ensure the occurrence of blowing up solutions in finite time.\\
The range of $k$ has been improved by Fuest in \cite{Fuest}, where a nonnegative initial datum $u_0$ has been constructed  such that the solution blows up in finite time when $\chi =1$, 
  
 \begin{equation*} \label{k_Fuest}
 \begin{cases}
1<k< \min\left\{ 2, \frac N 2\right\}, \  \ \mu >0, &{\rm for} \ \  N\geq 3, \\[6pt]
k=2, \quad  \qquad  \qquad  \quad  \mu\in \bigl(0,\frac{N-4}N\bigr),  \quad &{\rm for} \  \  N \geq 5. 
 \end{cases}
\end{equation*}
 The value $k=2$ is critical in the four and higher dimensions. \\
Recently  the case $f$ depending on the gradient of $v$ (flux limitation term) received considerable attention in the biomathematical literature.\\
The most relevant results on flux limitation  concern the case  $g(u)=0$.\\

In particular\\
$\diamond$ \  If   $f( |\nabla v|^2)=  |\nabla v|^{p-2} $, $\chi >0$, $\Omega\subset  \mathbb{R}^N, $
$$p\in(1,\infty) \quad  { \rm for}   \ N=1;  \quad \quad   
p\in \Bigl(1, \ \frac N{N-1}\Bigr) \quad  { \rm for} \   N\geq 2,$$ 
Negreanu and Tello in \cite{NT} obtained uniform bounds in $L^{\infty} (\Omega)$ and  the existence of global in time solutions; for the one-dimensional case there exist infinitely many non-constant steady-states for $p\in (1,2)$.
\\
$\diamond$ 
If  $f( |\nabla v|^2)=  \frac{1 } { \sqrt{1+ |\nabla v|^2  } } $ and $\Delta u$ is replaced by $\nabla \cdot \bigl( \frac { u \nabla u  }{\sqrt{u^2+ |\nabla u|^2}}\bigr)$, Bellomo and Winkler \cite{BW2} obtained the global existence of bounded classical solutions for arbitrary positive radial initial data $u_0 \in C^3(\overline{\Omega})$ when
\begin{align*}
\int_{\Omega} u_0 < \frac1{\sqrt{(\chi^2 -1)_+ } } , \ \ {\rm if} \ N=1; \qquad   \chi <1, \ \ N\geq 2.
\end{align*}
In \cite{BW}, the authors shows that the above conditions are essentially optimal in the sense that if $\chi >1$ and

\begin{equation*}
m>\frac1{\sqrt{\chi^2 -1 } } , \ \ {\rm if} \ N=1; \qquad m>0 \ {\rm arbitrary,} \ \ { \rm if} \  N\geq 2 \\[6pt]
\end{equation*}

there exists $u_0\in C^3(\overline{\Omega})$ with $\int_{\Omega} u_0=m,$ such that there exists a a unique blowing up classical solution.\\

$\diamond $ \ If
 $f(|\nabla v|^2)\geq K_f \bigl( 1+ |\nabla v|^2   \bigr)^{-\alpha}, \ K_f>0$, $\chi=1$,   $0< \alpha  < \frac { N-2} {2(N-1) }$, $\Omega$ a ball in $\mathbb {R}^N,$ with $N\geq 3$,
 for a considerably large set of radially symmetric initial data, the problem admits solutions blowing up in finite time in $L^\infty$-norm for the first component.  Otherwise, if $f(|\nabla v|^2)\leq K_f \bigl( 1+ |\nabla v|^2   \bigr)^{-\alpha}$, $\chi=1$ and $\alpha$ satisfies 
 
 \begin{equation*} \label{alpha Winkler}
 \begin{cases}
 \alpha > \frac { N-2} {2(N-1) }, \   &{\rm for} \  N \geq 2,  \\[6pt]
 \alpha \in \mathbb{R}, \ \qquad     &{\rm for}\  N=1,
 \end{cases}
\end{equation*}
 in general (not  symmetric setting), a global bounded solution exists (\cite{W-2020}).\\ The case $\alpha=  \frac { N-2} {2(N-1) }$ plays the role of a critical exponent and it is still an open problem.

$\diamond $ 
If $ f(|\nabla v|^2)=K_f \bigl( 1+ |\nabla v|^2   \bigr)^{-\alpha}, \ K_f>0$, $\chi=1$, 
$0< \alpha  < \frac { N-2} {2(N-1) }$, $\Omega =B_R(0)\subset \mathbb {R}^N,$ with $N\geq 3$, Marras, Vernier-Piro and Yokota in \cite{MVY}, for suitable  initial data, proved that a solution which blows up in $L^\infty$-norm blows up also in $L^p$-norm for some $p>\frac N2.$  
Moreover, a safe time interval of existence of the solution $[0,T]$ is obtained, with $T$ a lower bound of the blow-up time.
\\
Less attention was payed to the case with $f$ depending on the gradient of $v$  in presence of a source term $g(u)$.\\
It is the purpose of the present paper to address the above question for a class of functions $g(u)$ modeling sources of logistic type: $g(u)=\lambda u - \mu u^k$ , $\lambda >0, \ \mu >0$, and $ k >1$.\\

{\bf Main Results} The present work is addressed to study the behavior in time of the solutions of problem \eqref{sys} with $\chi=1$ in presence of the flux limitation term and the source term $g(u)=\lambda u - \mu u^k$ to varying $k\in(1,2]$. In particular in Section \ref{BlowUp in L^{infty}}  we construct an initial data such that the solution of problem \eqref{sys} blows up in $L^{\infty}$-norm in the following sense.  \\

\begin{thm} [Finite-time blow-up in $L^\infty$-norm]\label{BULinfty}
Let $\Omega\equiv B_R(0) \subset \R^N$, $R>0$. 
Moreover suppose
\begin{align*}
&N\geq 3, \quad & k\in \Bigl(1,\, \min \Big\{2, 1+ \frac{(N-2)^2}{4} \Big\} \Bigr) \quad \ \   and \ \mu >0\\
 or \ &N\geq 5, &k=2  \quad \ \  and \ 0<\mu \leq \mu_0,
\end{align*}

where $\mu_0>0$ is a constant determined in Lemma \ref{lemma H_7}.
Assume 
\begin{equation}\label{alpha}
0<\alpha < \frac {N-2}{2(N-1)}.
\end{equation}

Then for all $m_0>0$ there exist  radially symmetric as well as radially decreasing initial data
\begin{align}\label{u_0}
u_0 \in C^0(\bar \Omega), \quad u_0 \not\equiv 0 
\end{align}
 such that  
 \begin{align*}
\frac 1 {|\Omega|}  \int_{\Omega} u_0 \,dx = m_0,
 \end{align*}

and such that \eqref{sys} possesses a unique classical solution $(u,v)$ in $\Omega \times (0,T_{max})$, for some $T_{max} \in(0, \infty)$, which blows up at $T_{max}$ in the sense that
\begin{align}\label{blowupinfty}
\limsup_{t \nearrow T_{max}} \|u(\cdot, t)\|_{L^\infty(\Omega)}=\infty.
\end{align}
\end{thm}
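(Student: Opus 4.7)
The plan is to adapt the moment method of Winkler, used in the flux-limited source-free case in \cite{MVY}, by absorbing the logistic term $-\mu u^k$ via a Hölder-type estimate compatible with the blow-up rate produced by the chemotactic part. First I would pass to the mass accumulation function
$$w(s,t) := \int_0^{s^{1/N}} \rho^{N-1} u(\rho,t)\, d\rho, \qquad s \in [0,R^N],$$
so that $u = N w_s$. The radial form of the second equation in \eqref{sys}, together with $\int_\Omega v = 0$, yields
$$r^{N-1} v_r(r,t) = w(r^N,t) - \frac{m(t)}{N} r^N,$$
so that $|\nabla v|^2$, and hence the flux limiter $f(|\nabla v|^2)$, become explicit functions of $s$, $w$ and $m(t)$. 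Rewriting the first equation of \eqref{sys} then gives a scalar parabolic problem for $w$ on $(0,R^N) \times (0,\Tmax)$ in which the chemotactic term carries the damping factor $k_f(1+|\nabla v|^2)^{-\alpha}$ and the source contributes the pointwise terms $\lambda N w_s - \mu (N w_s)^k$.

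Next I would introduce the moment functional
$$\phi(t) := \int_0^{s_0} s^{-a}(s_0 - s) w(s,t)\, ds,$$
with $a \in (0,1)$ and $s_0 \in (0,R^N)$ to be chosen in terms of $N$, $\alpha$, $k$. Differentiating $\phi$ in $t$, using the PDE for $w$ and two integrations by parts, one obtains a decomposition into four contributions: the diffusion term, controlled by the geometry of the weight $s^{-a}(s_0 - s)$; a flux-limited cross-diffusion gain which, thanks to $\alpha < (N-2)/(2(N-1))$, produces a lower bound $C_1 \phi^{1+\theta}$ for some $\theta = \theta(N,\alpha) > 0$; a linear contribution $\lambda \phi$ from the growth part of $g$; and a damping contribution $-\mu \int_0^{s_0} s^{-a}(s_0 - s) u^k\, ds$ which, by a Hölder inequality in $w_s$, is bounded above by $C_2 \phi^\sigma$ for an exponent $\sigma = \sigma(N,\alpha,k,a)$.

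The blow-up would then follow by ODE comparison. Choosing $a$ and $s_0$ so that $\sigma \leq 1 + \theta$, one arrives at an autonomous superlinear differential inequality $\phi'(t) \geq \tilde C_1 \phi(t)^{1+\theta} - \tilde C_2 \phi(t)^\sigma - \tilde C_3$, which blows up in finite time as soon as $\phi(0)$ is sufficiently large. The restrictions $k < \min\{2,\, 1+(N-2)^2/4\}$ and $\alpha < (N-2)/(2(N-1))$ are precisely those that make the cross-diffusion exponent $1+\theta$ strictly exceed the damping exponent $\sigma$; in the critical case $k=2$, $N \geq 5$, one has $\sigma = 1+\theta$ and the ODE blows up only when $\tilde C_1 > \tilde C_2$, which is built into the smallness hypothesis $0 < \mu \leq \mu_0$ furnished by Lemma \ref{lemma H_7}. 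Finally, one constructs a radially decreasing $u_0 \in C^0(\overline\Omega)$ with prescribed mean $m_0$ but with mass sharply concentrated near the origin, so that $\phi(0)$ is as large as needed, and local well-posedness together with the ODE blow-up forces $\Tmax < \infty$ and $\|u(\cdot,t)\|_{L^\infty(\Omega)} \to \infty$ as $t \nearrow \Tmax$. The main obstacle will be the exponent bookkeeping tying together $N$, $\alpha$, $k$ and the free parameters $a, s_0$: making the flux-limited chemotactic gain strictly dominate the logistic damping, and in particular producing the explicit threshold $\mu_0$ in the critical regime $k=2$, requires a careful Hölder balance rather than a robust absorption, and is the point at which the stated upper bound $1+(N-2)^2/4$ on $k$ becomes sharp.
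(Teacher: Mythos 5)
Your overall strategy --- the J\"ager--Luckhaus reduction to a scalar nonlocal problem for the mass accumulation function $w$, a negatively weighted moment of $w$, a superlinear differential inequality, and initial data concentrated enough that the moment starts large --- is the strategy of the paper, so the plan is sound in outline. Two structural differences: your functional $\phi(t)=\int_0^{s_0}s^{-a}(s_0-s)w\,ds$ is the linear moment of \cite{W-2018} and \cite{Fuest}, whereas the paper takes $y(t)=\int_0^{R^N}s^{-a}w^b\,ds$ with $b\in(0,1)$ as in \cite{W-2011}, tests the $w$-equation against $(s+\ep)^{-a}w^{b-1}$, and closes with the \emph{integral} inequality $y(t)\ge y(0)+\delta\int_0^t y^{(b+1)/b}\,d\tau-Ct$ of Lemma \ref{lemma integral ineq} together with Lemma \ref{lemma y(t)}; the concave power $w^b$ is what lets H\"older convert the good term $\int_0^{R^N} s^{-a-1}w^{b+1}\,ds$ into $\bigl(\int_0^{R^N} s^{-a}w^b\,ds\bigr)^{(b+1)/b}$ over the whole interval, with no cutoff $s_0$ needed. (Also, a small sign slip: the radial identity is $r^{N-1}v_r=\frac{m(t)}{N}r^N-w(r^N,t)$, not the opposite.)

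The genuine gap is that essentially everything you defer as ``exponent bookkeeping'' \emph{is} the proof, and two of the deferred steps are not routine. First, the nonlocal damping term $\mu N^{k-1}\int_0^s w_s^k\,d\sigma$ cannot be estimated by a power of the good term without Fuest's pointwise bound $w_s\le w/s$ (Lemma \ref{Fuest}, which itself requires proving $u_r\le 0$); your proposal never mentions this ingredient, and without it the ``H\"older inequality in $w_s$'' you invoke does not produce a quantity comparable to $\phi^\sigma$. With it, the paper's Lemma \ref{lemma H_7} shows the $1<k<2$ case works exactly when one can choose $b=a\in\bigl(\sqrt{k-1},\min\{1,\frac{N-2}{2}\}\bigr)$ compatibly with \eqref{a}; nonemptiness of that interval is literally the hypothesis $k<\min\{2,1+\frac{(N-2)^2}{4}\}$, and for $k=2$ the damping is absorbed into a quarter of the gain precisely when $\mu\le\frac{a-1}{4N}c_1$ with $a>1$, which is the origin of $\mu_0$ (so your picture of the critical balance is correct, but unverified). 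Second, you assert a lower bound $C_1\phi^{1+\theta}$ for the chemotactic contribution without saying how the factor $f=k_f(1+|\nabla v|^2)^{-\alpha}$ is controlled from below; in the paper this is done by bounding $\bigl(w-\frac{m(t)}{N}s\bigr)^2$ via the mass estimate of Lemma \ref{lemma int u < bar m}, yielding the constant $\bar C$ in Lemma \ref{lemma s^-aw^b}, and it is the only place the flux limitation enters the blow-up argument. As submitted, the proposal is a correct roadmap that matches the paper's route, but the decisive inequalities are asserted rather than derived.
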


%

The second purpose of this paper is to prove that the solutions of \eqref{sys} blow up at finite time in $L^p$-norm, for some $p>1$, if they blow up in $L^{\infty}$-norm (Section \ref{BlowUp in L^p}).\\  

\begin{thm}[Finite-time blow-up in $L^p$-norm]\label{BULp}
Let\/ $\Omega \equiv B_R(0) \subset\mathbb{R}^N$, $N\geq 3$ and $R>0$. 
Then, a classical solution $(u, v)$ of \eqref{sys}  for 
$t \in (0, T_{max})$,
provided by Theorem~\ref{BULinfty}, is such that for all $p>\frac{N}{2}$, 
\begin{align*}
\limsup_{t \nearrow T_{max}} 
\left\|u(\cdot,t)\right\|_{L^{p}(\Omega)} 
= \infty.
\end{align*}
\end{thm}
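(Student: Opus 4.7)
The plan is to argue by contraposition. Assume there exists $p>N/2$ with $M:=\sup_{t\in(0,T_{\max})}\|u(\cdot,t)\|_{L^p(\Omega)}<\infty$. I would then derive a uniform $L^\infty$-bound on $u$ up to $T_{\max}$, which contradicts \eqref{blowupinfty} of Theorem~\ref{BULinfty}. In short, it suffices to prove the implication: ``bounded in $L^p$ on $(0,T_{\max})$ with $p>N/2$ implies bounded in $L^\infty$''.

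The first ingredient is elliptic regularity applied to the second equation. From $-\Delta v=m(t)-u$ with Neumann condition and $\int_\Omega v=0$, and since $|m(t)|\le M/|\Omega|^{1-1/p}$ by H\"older, standard $L^p$-theory yields $\|v(\cdot,t)\|_{W^{2,p}(\Omega)}\le C\|u(\cdot,t)\|_{L^p(\Omega)}\le CM$. Sobolev embedding then gives $\nabla v\in L^{q_0}(\Omega)$ uniformly in $t$, where $q_0=\tfrac{Np}{(N-p)_+}>N$ (or $L^\infty$ if $p\geq N$). The second ingredient is the Duhamel representation
\[
u(\cdot,t)=e^{t\Delta}u_0-\int_0^t e^{(t-s)\Delta}\nabla\!\cdot\!\bigl(u\,f(|\nabla v|^2)\nabla v\bigr)(s)\,ds+\int_0^t e^{(t-s)\Delta}g(u(s))\,ds,
\]
combined with the Neumann heat semigroup smoothing estimates
\[
\|e^{\tau\Delta}w\|_{L^q}\le C\tau^{-\frac{N}{2}(\frac1r-\frac1q)}\|w\|_{L^r},\qquad
\|e^{\tau\Delta}\nabla\!\cdot\!w\|_{L^q}\le C\tau^{-\frac12-\frac{N}{2}(\frac1r-\frac1q)}\|w\|_{L^r}.
\]
Since $0\le f(|\nabla v|^2)\le k_f$ and $g(u)\le\lambda u$ (the ``good'' term $-\mu u^k$ being discarded), and since H\"older gives $\|u\,f(|\nabla v|^2)\nabla v\|_{L^r}\le k_f\|u\|_{L^p}\|\nabla v\|_{L^{pr/(p-r)}}$, one obtains for appropriate $r\in(1,p)$ an estimate of the schematic form
\[
\|u(\cdot,t)\|_{L^q}\le\|u_0\|_{L^\infty}+C\int_0^t(t-s)^{-\frac12-\frac{N}{2}(\frac1r-\frac1q)}\,ds+C\lambda\int_0^t(t-s)^{-\frac{N}{2}(\frac1p-\frac1q)}\,ds,
\]
both time integrals being finite thanks to $p>N/2$. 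A Moser-type iteration then lifts the integrability of $u$ from $L^p$ to some $L^{q_1}$ with $q_1>p$; iterating finitely many times reaches $L^\infty$, at each round upgrading also the $L^s$-bound on $\nabla v$ via elliptic regularity.

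The main obstacle is orchestrating the bootstrap so that (i) the singular kernel $\tau^{-1/2-\frac{N}{2}(1/r-1/q)}$ coming from the drift stays integrable near $\tau=0$, (ii) the exponent $q$ is strictly increased at each step (in particular bypassing the threshold $q=N$ so that $\nabla v$ eventually lands in $L^\infty$), and (iii) the updates of the $L^s$-bound on $\nabla v$ through elliptic regularity remain compatible with the H\"older splitting. The flux-limitation factor $f$ and the logistic source $g$ introduce no essential difficulty, the former because $|f|\le k_f$ and the latter because $g(u)\le\lambda u$; the procedure is therefore a standard variant of the iteration used for the classical parabolic--elliptic Keller--Segel system, producing the desired contradiction and establishing Theorem~\ref{BULp}.
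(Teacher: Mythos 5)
Your proposal is correct in substance and reaches the theorem the same way at the top level (contradiction with the $L^\infty$ blow-up of Theorem~\ref{BULinfty}, reduced to the implication ``$L^p$-bounded with $p>\frac N2$ $\Rightarrow$ $L^\infty$-bounded''), but the way you close that implication is genuinely different from the paper's. The paper (Lemma~\ref{LemmaBoundedness u nabla v}) avoids any iteration: it keeps the flux-limitation factor and uses the pointwise bound $\frac{|\nabla v|}{(1+|\nabla v|^2)^{\alpha}}\le |\nabla v|^{1-2\alpha}$ (available since \eqref{alpha} forces $2\alpha<1$), then H\"older with exponents $\frac1q=\frac{2\alpha}{q}+\frac{1-2\alpha}{q}$, elliptic regularity for $\|\nabla v\|_{L^{p^*}}$, and the interpolation $\|u\|_{L^{q/(2\alpha)}}\le \|u\|_{L^\infty}^{\theta}\|u\|_{L^1}^{1-\theta}$ with $\theta=1-\frac{2\alpha}{q}\in(0,1)$; this yields in one step $\sup_{t\le T}\|u\|_{L^\infty}\le C_1+C_7\bigl(\sup_{t\le T}\|u\|_{L^\infty}\bigr)^{\theta}$, which is absorbed because $\theta<1$. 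It also works on windows $(t_0,t)$ with $t_0=\max\{0,t-1\}$ and bounds the source by the constant $\max g$ rather than by $\lambda u$. You instead discard the structure of $f$ entirely ($f\le k_f$) and run the classical Moser-type bootstrap for the Keller--Segel drift; this costs finitely many iterations and the bookkeeping you describe (which does close, since at each stage $\frac1{p_{k+1}}$ may be taken anywhere in $\bigl(\max\{0,\tfrac{2}{p_k}-\tfrac2N\},\tfrac1{p_k}\bigr)$, a nonempty interval exactly when $p_k>\frac N2$, with a gain bounded below), but it buys independence from the hypothesis $\alpha>0$: your argument would apply verbatim to the unlimited flux $f\equiv k_f$. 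Two small points to watch when writing it up: when you drop $-\mu u^k$ you should invoke order preservation of $e^{\tau\Delta}$ to replace the third Duhamel term by a nonnegative majorant before taking norms, and the constants in your integral inequalities are uniform on $(0,T_{max})$ only because $T_{max}<\infty$ here (or, as in the paper, because one works on unit time windows).
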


The investigation on blow-up solutions of system \eqref{sys} goes on with the study of the behavior near the blow-up time $T_{max}$ (Section \ref{lower bound}). The goal is to obtain a safe time interval $(0,T)$, ($T<T_{max}$), of existence of the solutions of \eqref{sys}; to this end, we define, for all $p>1$, the auxiliary function
\begin{equation}\label{Psi} 
\Psi(t):= \frac 1 {p} \|u(\cdot,t)\|^{p}_{L^{p}(\Omega)} \quad {\rm with}\quad  \Psi_0 := \Psi(0)= \frac 1 {p} \|u_0\|^{p}_{L^{p}(\Omega)},
\end{equation}
and we determine a lower estimate of the blow-up time $T_{max}$.

\begin{thm}[Lower bound of blow-up time]\label{LB}
Let\ $\Omega \equiv B_R(0) \subset\mathbb{R}^N$, $N\geq 3$, $R>0$ and let $\Psi$ be defined in \eqref{Psi}. 
Then, for all  $p>\frac N 2$ and some positive constants $B_1, B_2, B_3, B_4$, 
the blow-up time $T_{max}$ for \eqref{sys}, provided by Theorem~\ref{BULinfty}, 
satisfies the estimate
\begin{align}\label{lower Tmax in Lp}
T_{max}\geq T:= \int_{\Psi_0}^{\infty}\frac{d\eta}{B_1 \eta + B_2\eta^{\gamma_1} + B_3 \eta^{\gamma_2} +B_4 \eta^{\gamma}},
  \end{align}
with $\gamma_1:= \frac{p+1}{p}, \ \  \gamma_2:=\frac{2(p+1)-N} {2p-N}, \ \ \gamma:=\frac{2(p+1) - \frac{N(p+1)(1+\epsilon)}{p+1+\epsilon} }{2p-\frac{N(1+\epsilon)(p+1)}{p+1+\epsilon}}, \ \  
0<\epsilon<\frac{2p}{N}-1$.
\end{thm}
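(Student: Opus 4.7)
The plan is to derive on $(0, T_{max})$ a first-order autonomous differential inequality of the form
\[
\Psi'(t) \leq B_1 \Psi(t) + B_2 \Psi(t)^{\gamma_1} + B_3 \Psi(t)^{\gamma_2} + B_4 \Psi(t)^{\gamma},
\]
and then to separate variables, invoking Theorem~\ref{BULp} to guarantee that $\Psi(t) \nearrow \infty$ as $t \nearrow T_{max}$ whenever $p > N/2$.

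First I would differentiate $\Psi$, substitute the first equation of \eqref{sys} and integrate by parts, arriving at
\[
\Psi'(t) = -(p-1)\!\int_\Omega u^{p-2}|\nabla u|^2 dx + (p-1)\!\int_\Omega u^{p-1} f(|\nabla v|^2)\nabla u\cdot\nabla v\,dx + \lambda\!\int_\Omega u^p\,dx - \mu\!\int_\Omega u^{p+k-1}dx.
\]
The logistic term $-\mu\int u^{p+k-1}dx$ is nonpositive and is simply dropped. Using $f\leq k_f$ (immediate from \eqref{f}) and Young's inequality on the drift integral to absorb half of the diffusive term, one is left with
\[
\Psi'(t) \leq -\frac{2(p-1)}{p^2}\int_\Omega|\nabla u^{p/2}|^2 dx + C\int_\Omega u^p|\nabla v|^2 dx + p\lambda\,\Psi(t),
\]
in which the linear piece $p\lambda\Psi$ already supplies the coefficient $B_1$.

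Second I would control $\int_\Omega u^p|\nabla v|^2 dx$. An initial Hölder step with exponents $\bigl(\tfrac{p+1}{p},p+1\bigr)$, and a parallel Hölder step with the perturbed exponents governed by $\epsilon\in(0,\tfrac{2p}{N}-1)$, produces two different factorizations $\|u\|_{L^{p+1}}^p\|\nabla v\|_{L^{2(p+1)}}^2$ and $\|u\|_{L^{p+1+\epsilon'}}^p\|\nabla v\|_{L^{s(\epsilon)}}^2$; these are exactly what will eventually yield the two distinct exponents $\gamma_2$ and $\gamma$. For each factorization I would then use elliptic regularity for the Neumann problem $\Delta v = m(t) - u$ together with the Sobolev embedding $W^{2,q}\hookrightarrow W^{1,s}$, valid at $q=\tfrac{Ns}{N+s}<p+1$, to transfer $\|\nabla v\|_{L^s}$ to $\|u\|_{L^q}$ and, via a final Hölder, to $\|u\|_{L^{p+1}}$. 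The extra factor $\|u\|_{L^{p+1}}^p$ treated in isolation yields the exponent $\gamma_1=(p+1)/p$.

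Third, writing $w:=u^{p/2}$ and applying the Gagliardo--Nirenberg inequality
\[
\|w\|_{L^{2(p+1)/p}} \leq C\,\|\nabla w\|_{L^2}^{\theta}\,\|w\|_{L^2}^{1-\theta}, \qquad \theta=\frac{N}{2(p+1)},
\]
each remaining $\|u\|_{L^{p+1}}^r$ is recast as a product $\|\nabla u^{p/2}\|_{L^2}^{2a}\|u\|_{L^p}^{2b}$. A final Young's inequality, tuned so that the gradient exponent equals exactly $1$, absorbs the $\|\nabla u^{p/2}\|_{L^2}^2$ contribution into the negative dissipative term and leaves a pure power of $\|u\|_{L^p}^p=p\Psi$. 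Solving the resulting linear system for $a,b$ in each of the three nonlinear terms produces exactly the exponents $\gamma_1,\gamma_2,\gamma$ in the statement. Continuity of $\Psi$ on $[0,T_{max})$ together with Theorem~\ref{BULp} forces $\Psi(t)\to\infty$ as $t\to T_{max}$, so separation of variables in the ODI yields \eqref{lower Tmax in Lp}. The main obstacle will be the bookkeeping of Hölder and Gagliardo--Nirenberg exponents, calibrated so that the Young absorption leaves the power of $\|\nabla u^{p/2}\|_{L^2}^2$ equal to exactly one; the constraint $\epsilon\in(0,\tfrac{2p}{N}-1)$ in the definition of $\gamma$ arises precisely from the requirement that the corresponding interpolation exponent remain admissible, and the assumption $p>N/2$ is what keeps $\gamma_2$ finite and positive.
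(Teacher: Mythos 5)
Your overall strategy (derive $\Psi'\le B_1\Psi+B_2\Psi^{\gamma_1}+B_3\Psi^{\gamma_2}+B_4\Psi^{\gamma}$, then separate variables) matches the paper, but the way you treat the chemotactic term diverges at the first step, and that divergence is fatal to the stated exponents. You integrate by parts once, bound $f\le k_f$, and apply Young's inequality to land on $C\int_\Omega u^p|\nabla v|^2\,dx$. After your H\"older step this is $C\|u\|_{L^{p+1}}^{p}\|\nabla v\|_{L^{2(p+1)}}^{2}$, and elliptic regularity plus Sobolev converts $\|\nabla v\|_{L^{2(p+1)}}$ into $\|u\|_{L^{p+1}}$, so you are left with $C\bigl(\int_\Omega u^{p+1}\bigr)^{\frac{p+2}{p+1}}$. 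Feeding this into Gagliardo--Nirenberg with $\theta_0=\frac{N}{2(p+1)}$ gives a gradient factor $\bigl(\int|\nabla u^{p/2}|^2\bigr)^{\frac{N(p+2)}{2p(p+1)}}$, and the Young absorption into the dissipation requires $\frac{N(p+2)}{2p(p+1)}<1$. This fails for $p$ near $\frac N2$ (e.g.\ for $N=3$ it forces $p>2$ while the theorem asserts the bound for all $p>\frac32$), and even where it succeeds the surviving power of $\int u^p$ is $\frac{(2(p+1)-N)(p+2)}{2p(p+1)-N(p+2)}$, which is not $\gamma_2=\frac{2(p+1)-N}{2p-N}$; likewise the lower-order GN term yields $\Psi^{\frac{p+2}{p}}$, not $\Psi^{\gamma_1}$. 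The claim that the two factors of a single product can be ``treated in isolation'' to yield the separate exponents $\gamma_1$ and $\gamma_2$ is not a valid manipulation.

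The paper avoids all of this by integrating by parts a \emph{second} time, rewriting $\mathcal{J}_2=-\frac{p-1}{p}\int_\Omega u^p\,\nabla\cdot[\nabla v\,f(|\nabla v|^2)]\,dx$, substituting $\Delta v=m(t)-u$, and exploiting the radial structure to express $\nabla v\cdot\nabla(|\nabla v|^2)$ through $v_r$ and $v_{rr}=\frac{m(t)}{N}-u+\frac{N-1}{r^N}\int_0^r\rho^{N-1}u\,d\rho$, with the flux limiter controlled pointwise via $\frac{1}{(1+\xi)^\alpha}\le 1$ and $\frac{\xi}{(1+\xi)^{\alpha+1}}\le 1$. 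This produces exactly $\int_\Omega u^{p+1}\,dx$ to the first power (whence $\gamma_2$, with gradient exponent $\frac{N}{2p}<1$ for every $p>\frac N2$) and the mixed term $\bigl(\int u^{p+1}\bigr)^{\frac{1}{p+1}}\bigl(\int u^{p+1+\epsilon}\bigr)^{\frac{p}{p+1+\epsilon}}$ (whence $\gamma$ and the constraint on $\epsilon$). To repair your argument you would need to reproduce this second integration by parts and the radial computation; the one-step Young estimate discards precisely the structure that keeps the exponents within the claimed range.
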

\begin{corollary}\label{CorollaryLower bound}
Under the assumptions of Theorem \ref{BULp},
let $(u,v)$ be a solution of \eqref{sys} and $\Psi(t)$ and $\Psi_0$ defined in \eqref{Psi}.  Then there exists a safe interval of existence of  $(u,v)$ say  $[0,T]$ with 
\[
T := \frac{1}{\mathcal{A} (\gamma-1) \Psi_0^{\gamma -1}} \leq T_{max}.
\]
We remark that $ \frac{1}{ \mathcal{A} (\gamma -1) \Psi_0^{\gamma -1}} $ is explicitly computable.
\end{corollary}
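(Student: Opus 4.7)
The plan is to deduce the explicit lower bound of Corollary~\ref{CorollaryLower bound} from the integral estimate \eqref{lower Tmax in Lp} of Theorem~\ref{LB} by absorbing the three lower-order terms in the denominator of the integrand into a single power $\eta^\gamma$. The key observation is that throughout the range of integration one has $\eta\geq\Psi_0>0$, so for every exponent $\gamma_\ast<\gamma$ one may write
\[
\eta^{\gamma_\ast}=\eta^{\gamma_\ast-\gamma}\,\eta^\gamma\leq \Psi_0^{\gamma_\ast-\gamma}\,\eta^\gamma.
\]

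Before applying this estimate I would verify the ordering $1<\gamma_1<\gamma_2<\gamma$ under the standing hypotheses $p>\frac{N}{2}$ and $0<\epsilon<\frac{2p}{N}-1$. Writing $\gamma_1=1+\frac{1}{p}$, $\gamma_2=1+\frac{2}{2p-N}$, and $\gamma=1+\frac{2}{2p-\beta}$ with $\beta:=\frac{N(1+\epsilon)(p+1)}{p+1+\epsilon}$, the needed chain reduces to $0<N<\beta<2p$. The upper bound $\beta<2p$ is exactly the admissibility condition imposed on $\epsilon$, while $\beta>N$ follows from $(1+\epsilon)(p+1)>p+1+\epsilon$ for $\epsilon>0$. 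In particular $\gamma>1$, which guarantees convergence of the improper integral below.

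With the ordering in hand, a term-by-term application of the displayed inequality gives
\[
B_1\eta+B_2\eta^{\gamma_1}+B_3\eta^{\gamma_2}+B_4\eta^\gamma\leq \mathcal{A}\,\eta^\gamma,\qquad
\mathcal{A}:=B_1\Psi_0^{1-\gamma}+B_2\Psi_0^{\gamma_1-\gamma}+B_3\Psi_0^{\gamma_2-\gamma}+B_4,
\]
and $\mathcal{A}$ is explicitly computable in terms of the already determined constants $B_1,\ldots,B_4,\Psi_0,\gamma_1,\gamma_2,\gamma$. Substituting this upper bound for the denominator into \eqref{lower Tmax in Lp} and computing the resulting elementary integral yields
\[
T_{max}\geq \int_{\Psi_0}^{\infty}\frac{d\eta}{\mathcal{A}\,\eta^\gamma}=\frac{1}{\mathcal{A}(\gamma-1)\Psi_0^{\gamma-1}},
\]
which is precisely the safe existence time $T$ claimed in the corollary. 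No substantive obstacle arises: the argument is essentially a comparison of the integrand with its dominant tail, and the only delicate point is confirming that $\gamma$ is indeed the largest of the four exponents, since otherwise the term-by-term majorization would reverse.
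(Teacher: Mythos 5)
Your argument is correct and is essentially the paper's own: you identify the ordering $1<\gamma_1<\gamma_2<\gamma$, absorb the lower-order powers into $\mathcal{A}\,\eta^{\gamma}$ using $\eta\geq\Psi_0$, and integrate, arriving at exactly the same constant $\mathcal{A}=B_1\Psi_0^{1-\gamma}+B_2\Psi_0^{\gamma_1-\gamma}+B_3\Psi_0^{\gamma_2-\gamma}+B_4$. The only (cosmetic, and if anything slightly cleaner) difference is that you perform the absorption directly in the integral \eqref{lower Tmax in Lp}, whereas the paper returns to the differential inequality \eqref{Psi' final}, uses $\Psi(t)\geq\Psi_0$ to reduce it to $\Psi'\leq\mathcal{A}\Psi^{\gamma}$, and then integrates that ODI.
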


We observe that  the blow-up phenomena can be avoided for different choises of the data.
Moreover, we will prove that the results in Theorem \ref{BULinfty}  with 
$f(|\nabla v|^2 )= k_f(1+ |\nabla v|^2)^{-\alpha}$ fulfilling $0<\alpha<\frac {N-2}{2(N-1)}$ and $\kappa \le 2$
cannot be improved. In fact if $\alpha>\frac {N-2}{2(N-1)}$ or $\kappa>2$ we obtain that  the global solution is bounded (Section \ref{boundedness}).\\

\begin{thm}[Global existence and boundedness]\label{GEB}
Let\ $\Omega \equiv B_R(0) \subset\mathbb{R}^N$, $N\geq 3$, $R>0$. 
Assume that either one of the following two conditions is satisfied: 
\begin{enumerate}
\item $\alpha>\dfrac {N-2}{2(N-1)}$ and $k>1$, 
\item $\alpha>0$ and $k>2$.
\end{enumerate}
Then for all radially symmetric nonnegative initial data $u_0 \in C^0(\bar{\Omega})$, 
system \eqref{sys} possesses a unique global classical solution $(u,v)$ 
in $\Omega \times (0,\infty)$, which is bounded in the sense that
\begin{align*}
\sup_{t \in (0,\infty)} \|u(\cdot, t)\|_{L^\infty(\Omega)}<\infty.
\end{align*}
\end{thm}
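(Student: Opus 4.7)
The plan is to obtain uniform-in-time $L^\infty$-bounds on $u$ by first proving $L^p$-bounds for all sufficiently large $p$, and then bootstrapping to $L^\infty$ via Moser iteration (equivalently, via the variation-of-constants formula for the Neumann heat semigroup). Local well-posedness furnishes a unique classical solution on a maximal interval $(0,T_{\max})$, and the standard extensibility criterion reduces the statement to ruling out finite-time blow-up of $\|u(\cdot,t)\|_{L^\infty(\Omega)}$. Testing the first equation of \eqref{sys} by $u^{p-1}$ with $p\ge 2$ and integrating by parts gives
\begin{equation*}
\frac{1}{p}\frac{d}{dt}\int_{\Omega}u^p + (p-1)\int_{\Omega} u^{p-2}|\nabla u|^2 = (p-1)\int_{\Omega} u^{p-1} f(|\nabla v|^2)\nabla u\cdot\nabla v + \lambda\int_{\Omega} u^p - \mu \int_{\Omega} u^{p+k-1},
\end{equation*}
and a Young inequality bounds the chemotactic term by $\tfrac{p-1}{2}\int_{\Omega} u^{p-2}|\nabla u|^2 + \tfrac{p-1}{2}k_f^2\int_{\Omega} u^p(1+|\nabla v|^2)^{1-2\alpha}$. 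Absorbing the first piece into the diffusion reduces the task to controlling $\int_{\Omega} u^p(1+|\nabla v|^2)^{1-2\alpha}$ by quantities dominated by $\|u\|_{L^p}$ and the absorptive term $\mu\int_{\Omega} u^{p+k-1}$.

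Under the first hypothesis $\alpha>\tfrac{N-2}{2(N-1)}$, I am in the range where Winkler \cite{W-2020} established boundedness in the source-free setting. Elliptic $W^{2,q}$-regularity applied to $-\Delta v = m(t)-u$ delivers $\|\nabla v\|_{L^r(\Omega)}\le C\|u\|_{L^q(\Omega)}$ for $r,q$ connected by the Sobolev relation $\tfrac{1}{r}=\tfrac{1}{q}-\tfrac{1}{N}$; the hypothesis $1-2\alpha<\tfrac{1}{N-1}$ is precisely what permits a choice of $r$ for which $\int_{\Omega} u^p|\nabla v|^{2(1-2\alpha)}$ can, after Gagliardo--Nirenberg interpolation applied to $u^{p/2}$, be absorbed into a fraction of the diffusive dissipation plus a lower-order power of $\|u\|_{L^p}$. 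The logistic contribution $\lambda\int u^p$ is harmless, and can if convenient be soaked up by a share of $-\mu\int u^{p+k-1}$ via Young's inequality; the outcome is an ODI of the form $y'+cy\le C$ for $y(t)=\|u(\cdot,t)\|_{L^p}^p$, giving uniform $L^p$-control.

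Under the second hypothesis $k>2$ the super-quadratic absorption closes the estimate irrespective of $\alpha$. Young's inequality
\begin{equation*}
\int_{\Omega} u^p (1+|\nabla v|^2)^{1-2\alpha} \le \eta\int_{\Omega} u^{p+k-1} + C_\eta\int_{\Omega}(1+|\nabla v|^2)^{\frac{(p+k-1)(1-2\alpha)}{k-1}}
\end{equation*}
with $\eta$ small absorbs the $u$-part into $-\mu\int u^{p+k-1}$, while the remaining $|\nabla v|$-integral is handled by elliptic regularity once $p$ is chosen so large that $\|\nabla v\|_{L^{s}}$, with $s=\tfrac{2(p+k-1)(1-2\alpha)}{k-1}$, is controlled by $\|u\|_{L^{p}}$; the condition $k>2$ is exactly what forces the resulting exponent on the right to lie strictly below $p+k-1$, producing closure. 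Once uniform $L^p$-control for some $p>N$ has been secured by either route, a standard Moser iteration applied to $u_t-\Delta u=-\nabla\cdot(uf\nabla v)+g(u)$, together with the elliptic bound $\|\nabla v\|_{L^\infty}\le C\|u\|_{L^p}$ valid for $p>N$, yields the claimed uniform $L^\infty$-bound and hence $T_{\max}=\infty$.

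The main obstacle is the precise closure in the first case near the borderline $\alpha=\tfrac{N-2}{2(N-1)}$: the exponents in the Gagliardo--Nirenberg interpolation must be matched against the available elliptic regularity for $\nabla v$ essentially without slack, as in Winkler's delicate computation. Transferring that calculation to the present framework---with $m(t)$ in place of $v$ in the second equation and with the logistic source added---contributes bookkeeping rather than conceptual difficulty, since the source term is dissipative for large $u$ and can never worsen the estimates.
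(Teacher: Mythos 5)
Your Case~1 takes a genuinely different route from the paper's. The paper bounds $\|u(\cdot,t)\|_{L^\infty(\Omega)}$ directly from the Duhamel representation and the semigroup estimates of Lemma~\ref{Cao}, using only the mass bound $\|u\|_{L^1}\le\bar m$ together with the observation that $\alpha>\frac{N-2}{2(N-1)}$ permits a choice $q>(1-2\alpha)N$ with $q<\frac{N}{N-1}$ for which $\|\nabla v\|_{L^q}$ is uniformly controlled; no $L^p$ energy estimate is needed. Your testing-plus-Gagliardo--Nirenberg scheme also closes under exactly this hypothesis (the H\"older exponent $\theta<\frac{N}{N-2}$ needed for subcritical interpolation of $\|u^{p/2}\|_{L^{2\theta}}$ against $\|u\|_{L^1}$ is available precisely when $1-2\alpha<\frac{1}{N-1}$), though you must then still perform the $L^p\to L^\infty$ upgrade that the paper obtains from Lemma~\ref{LemmaBoundedness u nabla v}. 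As a sketch this part is acceptable.

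Case~2 contains a genuine gap. After your early Young inequality on the cross term you must control $\int_\Omega u^p(1+|\nabla v|^2)^{1-2\alpha}\,dx$, and your proposed splitting forces you to bound $\|\nabla v\|_{L^s(\Omega)}$ with $s=\frac{2(p+k-1)(1-2\alpha)}{k-1}$. For large $p$ elliptic regularity gives at best $\|\nabla v\|_{L^s}\le C\|u\|_{L^q}$ with $q=\frac{Ns}{N+s}<N\le p$, hence $\int_\Omega|\nabla v|^s\,dx\le C\|u\|_{L^p}^s\sim\Psi^{s/p}$, and absorption into $-\mu\int_\Omega u^{p+k-1}\,dx\le -c\,\Psi^{\frac{p+k-1}{p}}$ requires $s\le p+k-1$, i.e.\ $\frac{2(1-2\alpha)}{k-1}\le1$, i.e.\ $k\ge 3-4\alpha$. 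For $\alpha\in(0,\tfrac14)$ and $k\in(2,\,3-4\alpha)$ your estimate does not close, so the claim that ``$k>2$ is exactly what forces the resulting exponent to lie strictly below $p+k-1$'' is false; interpolating $\|u\|_{L^q}$ against $\|u\|_{L^1}$ improves the threshold only to $k>1+\frac{2(N-1)(1-2\alpha)}{N}$, still above $2$ for small $\alpha$. The root cause is that applying Cauchy--Schwarz to $\int_\Omega u^{p-1}f\,\nabla u\cdot\nabla v\,dx$ destroys the structure of the drift. The paper instead integrates by parts once more, writes $\nabla\cdot(f(|\nabla v|^2)\nabla v)=f\,\Delta v+f'(|\nabla v|^2)\,\nabla v\cdot\nabla(|\nabla v|^2)$, substitutes $\Delta v=m(t)-u$, and in the radial setting uses $\nabla v\cdot\nabla(|\nabla v|^2)=2v_r^2v_{rr}$ together with the explicit expression for $v_{rr}$ and the pointwise bound $\frac{v_r^2}{(1+v_r^2)^{\alpha+1}}\le1$ (valid for every $\alpha>0$). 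This keeps the worst contribution at the level of $\int_\Omega u^{p+1}\,dx$ plus terms of order $\Psi^{1+O(1/p)}$, all dominated by $\mu\int_\Omega u^{p+k-1}\,dx$ for sufficiently large $p$ precisely when $k>2$. Without this second integration by parts, or some other mechanism exploiting $\Delta v=m(t)-u$, your Case~2 cannot reach the full range $\alpha>0$, $k>2$.
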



\section{Preliminaries}\label{preliminaries}
In this section, we present some preliminary lemmata which we shall use in the proof of our main results.

\begin{lemma}\label{lemma BU}
Let $N\geq 1$, and assume that $\Omega =B_R(0) \subset \mathbb{R}^N$  for some
$R>0$, $f$, $g$ satisfy \eqref{f}, \eqref{g-logistic} and that $u_0 \in C^0(\bar \Omega)$ is nonnegative and radially symmetric with respect to $x=0$. Then there exist $T_{max} \in (0, \infty]$ and a unique pair 
\[
(u,v) \in \Big((C^0(\bar \Omega \times [0, T_{max})) \cap C^{2,1} (\bar \Omega \times (0, T_{max} ))\Big)^2
\]
which solves \eqref{sys}  in the classical sense in $\Omega \times (0, T_{max}).$ Moreover, we have $u>0$ in $\Omega \times (0,T_{max})$, and both $u(\cdot , t)$ and $v(\cdot , t)$ are radially symmetric with respect to $x=0$ for all $t\geq 0$. Finally,
\[
\text{if} \ \ T_{max} < \infty, \ \ \text{then} \ \ 
\limsup_{t \nearrow T_{max}}\|u(\cdot, t) \|_{L^{\infty}(\Omega)} = \infty.
\]
\end{lemma}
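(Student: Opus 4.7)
The plan is to establish this local existence/uniqueness result through the standard two-step fixed-point scheme used for parabolic--elliptic chemotaxis systems, and then upgrade the resulting weak regularity to the claimed classical setting by bootstrap. The elliptic part of \eqref{sys} decouples in the sense that, given any continuous $u$ with spatial mean $m(t)$, the problem $-\Delta v = u - m(t)$ with $\partial_\nu v = 0$ and $\int_\Omega v\,dx = 0$ is uniquely solvable by standard elliptic Neumann theory (the compatibility condition holds by construction), and elliptic $L^p$--regularity yields $v(\cdot,t) \in W^{2,p}(\Omega)$ for every $p < \infty$, so that $\nabla v$ is Hölder continuous and bounded whenever $u$ is bounded. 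Consequently $f(|\nabla v|^2)\nabla v$ is a bounded, regular drift and the first equation becomes a quasilinear parabolic equation with globally bounded, Lipschitz coefficients in $\nabla v$ and with the smooth nonlinear absorption $g(u)=\lambda u - \mu u^k$.

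First I would fix $T\in(0,1]$ small and introduce the closed set $X_T := \{u \in C^0(\bar\Omega\times[0,T]) : u \text{ radial}, \ 0 \le u \le M\}$ for a suitable $M > \|u_0\|_{L^\infty}+1$. Define the operator $\Phi:X_T \to X_T$ by: (i) extract $v[u]$ from the elliptic problem as above; (ii) let $\Phi(u)$ be the unique solution $\tilde u$ of $\tilde u_t = \Delta \tilde u - \nabla\cdot(\tilde u f(|\nabla v[u]|^2)\nabla v[u]) + g(\tilde u)$ with Neumann data and initial datum $u_0$, which exists classically by standard linear parabolic theory (Ladyzhenskaya--Solonnikov--Uraltseva) because the coefficients lie in $C^{\alpha,\alpha/2}$. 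A contraction estimate on $T$-small intervals, controlling $\|u-\bar u\|_{C^0}$ using continuous dependence of $v[u]$ on $u$ in $W^{2,p}$ and Lipschitz continuity of $g$ on bounded sets, gives a unique fixed point and hence a local solution; Schauder estimates then upgrade regularity to $C^{2,1}$.

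Positivity follows from the strong parabolic maximum principle applied to the linear equation satisfied by $u$ (writing $g(u)=u(\lambda - \mu u^{k-1})$ so the zeroth-order term is bounded below on the invariant set where $u$ stays bounded), using $u_0\ge 0$, $u_0\not\equiv 0$. For radial symmetry, observe that for every orthogonal $R\in O(N)$ the pair $(u\circ R, v\circ R)$ satisfies the same system with the same (radial) initial data; uniqueness forces $u(Rx,t)=u(x,t)$ and similarly for $v$. Concatenating the local solutions on $[0,T_1],[T_1,T_1+T_2],\dots$ with $u(\cdot,T_j)$ as new initial data yields a maximal existence time $T_{\max}\in(0,\infty]$.

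The delicate point, and the step I expect to be the main obstacle, is the extensibility criterion: showing that $T_{\max}<\infty$ forces $\|u(\cdot,t)\|_{L^\infty}\to\infty$. Because the drift $f(|\nabla v|^2)\nabla v$ and the term $g$ are both controlled solely through an $L^\infty$-bound on $u$ (since $\|\nabla v(\cdot,t)\|_{L^\infty}$ is bounded by $\|u(\cdot,t)\|_{L^\infty}$ via elliptic regularity, and $g$ is polynomial in $u$), one argues by contrapositive: assume $\limsup_{t\nearrow T_{\max}}\|u(\cdot,t)\|_{L^\infty}=K<\infty$. Then $\|\nabla v\|_{L^\infty}$ is uniformly bounded on $[0,T_{\max})$, so the drift coefficient lies in $L^\infty$, and parabolic Hölder theory (or semigroup estimates combined with $g(u)$ being Lipschitz on bounded sets) provides a uniform $C^{\alpha,\alpha/2}$ bound for $u$ up to $t=T_{\max}$. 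This allows the fixed-point construction to be restarted at some $t_0 < T_{\max}$ on an interval of length depending only on $K$, producing an extension beyond $T_{\max}$ and contradicting maximality. This completes the proof.
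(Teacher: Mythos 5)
The paper offers no proof of this lemma at all: it is stated as a known local well-posedness fact, implicitly deferred to the standard literature for flux-limited Keller--Segel systems (the arguments in \cite{BW2}, \cite{W-2020}, \cite{MVY}). Your proposal reconstructs exactly that standard argument --- elliptic solvability and $W^{2,p}$ regularity for $v$ given $u$ (the compatibility condition holding because $m(t)$ is the mean of $u$), a Banach fixed point on a ball in $C^0(\bar\Omega\times[0,T])$, Schauder bootstrap, strong maximum principle for positivity, rotation invariance plus uniqueness for radial symmetry, and the contrapositive restart argument for the extensibility criterion --- so there is nothing to compare it against in the paper and it is the right skeleton. Three points deserve care if you were to write it out in full. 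First, the positivity claim $u>0$ genuinely requires $u_0\not\equiv 0$, which you correctly invoke but which the lemma as stated in the paper omits (if $u_0\equiv 0$ then $u\equiv 0$ since $g(0)=0$). Second, you should verify that $\Phi$ maps $X_T$ into itself, i.e.\ that $0\le\tilde u\le M$ for small $T$; nonnegativity comes from the maximum principle using $g(0)=0$, and the upper bound from continuity at $t=0$, but this self-mapping step is part of the fixed-point argument and should not be skipped. Third, your appeal to Ladyzhenskaya--Solonnikov--Uraltseva with coefficients in $C^{\alpha,\alpha/2}$ is slightly too quick: for $u\in X_T$ merely continuous in time, elliptic regularity gives H\"older continuity of $\nabla v[u]$ in space but only continuity in time, so the clean route is to run the contraction for mild solutions via the Neumann heat semigroup (as in Lemma \ref{Cao}) and only afterwards bootstrap to $C^{2,1}$; this is cosmetic but it is where a literal reading of your argument would stall.
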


%
%

We next give some properties of the Neumann heat semigroup 
which will be used later. 
For the proof, see \cite[Lemma 2.1]{Cao} and \cite[Lemma~1.3]{W-2010}.

\begin{lemma} \label{Cao} 
Let $(e^{t \Delta})_{t\geq 0}$ be the Neumann heat semigroup in 
$\Omega$, and let $\mu_1 >0$ denote the first non zero eigenvalue of 
$-\Delta$ in $\Omega$ under Neumann boundary conditions. 
Then there exist $k_1,  k_2 >0$ which 
 depend only on $\Omega$ and have  
the following properties\/{\rm :}
\begin{enumerate}[label=(\roman*)]
\item if\/ $1 \leq q\leq {\rm p}\leq \infty$, then 
\begin{equation} \label{etDeltaz}
\|e^{t \Delta} z\|_{L^{{\rm p}}(\Omega)} \leq k_1\bigl(1+t^{ - \frac N 2(\frac 1 q - \frac 1 {\rm p})}\bigr) e^{-\mu_1 t} 
\|z\|_{L^q(\Omega)}, \ \ \forall \,t >0
\end{equation}
holds for all $z\in L^q(\Omega)$ satisfying $\int_{\Omega} z =0$.
\item If\/ $1< q \leq {\rm p} \leq \infty$, then
\begin{equation} \label{etDelta nablaz}
\|e^{t \Delta} \nabla \cdot \textbf{z\,}\|_{L^{{\rm p}}(\Omega)} \leq k_2\big(1+ t^{-\frac 1 2 - \frac N 2(\frac 1 q - \frac 1 {\rm p})}\big) e^{-\mu_1 t} \|\textbf{z\,}\|_{L^q(\Omega)}, \ \ \forall \,t >0
\end{equation}
is valid for any $\textbf{z\,} \in (L^{q}(\Omega))^N$, 
where $e^{t \Delta} \nabla \cdot {}$ is the extension of the operator
$e^{t \Delta} \nabla \cdot {}$ on $(C_0^\infty(\Omega))^N$ to $(L^q(\Omega))^N$. 
\end{enumerate}
\end{lemma}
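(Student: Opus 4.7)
The statement under consideration (Lemma 2.2) collects the classical Neumann heat semigroup $L^q$–$L^p$ decay estimates that are standard in the parabolic chemotaxis literature, and the authors explicitly refer to \cite{Cao} and \cite[Lemma~1.3]{W-2010}. Accordingly, my plan is \emph{not} to re-derive the estimates from scratch but to outline the standard argument that justifies them; in the paper I would simply cite the two references and move on. The sketch below indicates what those references contain.

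For (i), the plan is to split any $z\in L^q(\Omega)$ into its spatial mean $\bar z = \frac{1}{|\Omega|}\int_\Omega z$ and its zero-mean part $\tilde z = z - \bar z$. Because the hypothesis asks $\int_\Omega z = 0$, only $\tilde z$ is present. The argument then has two regimes. First, for small times $t\in(0,1]$, I would use the Gaussian upper bound on the Neumann heat kernel $G(x,y,t)$ available on smooth bounded domains, namely $G(x,y,t)\le C t^{-N/2} \exp(-c|x-y|^2/t)$, which by Young's inequality immediately yields the ultracontractive bound $\|e^{t\Delta}z\|_{L^p} \le C t^{-\tfrac{N}{2}(\tfrac{1}{q}-\tfrac{1}{p})}\|z\|_{L^q}$. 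Second, for large times $t\ge 1$, I would expand $z$ in the $L^2$-orthonormal basis of Neumann eigenfunctions $\{\varphi_j\}$ with eigenvalues $0=\mu_0<\mu_1\le \mu_2\le\cdots$. The zero-mean constraint kills the $\varphi_0$-component, so $e^{t\Delta}z = \sum_{j\ge 1} e^{-\mu_j t}(z,\varphi_j)\varphi_j$, and one factors out $e^{-\mu_1 t}$; the remaining semigroup at the fixed time $t=1$ is a bounded operator from $L^q$ into $L^p$ by the short-time bound. Combining the two regimes and absorbing constants yields exactly the asserted factor $k_1(1+t^{-\tfrac{N}{2}(\tfrac{1}{q}-\tfrac{1}{p})})e^{-\mu_1 t}$.

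For (ii), the strategy is to pass $\nabla\cdot$ through the semigroup by duality. On test functions $\varphi\in C^\infty(\bar\Omega)$ with $\tfrac{\partial \varphi}{\partial \nu}=0$, integration by parts gives $\int_\Omega (e^{t\Delta}\nabla\cdot \mathbf{z})\,\varphi = -\int_\Omega \mathbf{z}\cdot \nabla e^{t\Delta}\varphi$, so the required bound reduces to a gradient estimate of the form $\|\nabla e^{t\Delta}\varphi\|_{L^{q'}} \le k_2(1+t^{-\tfrac12-\tfrac{N}{2}(\tfrac{1}{p'}-\tfrac{1}{q'})})e^{-\mu_1 t}\|\varphi\|_{L^{p'}}$. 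This estimate is proved by the same two-regime method: for short times, one uses the pointwise Gaussian bound on $\nabla_x G(x,y,t)$, which brings in the extra factor $t^{-1/2}$; for long times one again uses the spectral expansion, observing that $\|\nabla \varphi_j\|_{L^2}^2 = \mu_j\|\varphi_j\|_{L^2}^2$, so the gradient is controlled after composing with a fixed-time step of the semigroup. The constraint $q>1$ appears here because the duality argument requires $p'<\infty$.

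The main technical input, and the step I expect to be the real substance, is the Gaussian upper bound on the Neumann heat kernel (and on its spatial gradient) on a smooth bounded domain; this is well known but nontrivial and is exactly what is borrowed from \cite{Cao,W-2010}. Once those kernel bounds are in hand, the splitting into short-time ultracontractivity plus long-time spectral-gap exponential decay is essentially mechanical, and the proof reduces to bookkeeping the exponents $-\tfrac{N}{2}(\tfrac{1}{q}-\tfrac{1}{p})$ and, in (ii), the additional $-\tfrac12$ coming from the derivative.
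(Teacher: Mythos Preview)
Your proposal is correct and matches the paper's treatment exactly: the paper does not prove Lemma~2.2 at all but simply refers the reader to \cite[Lemma~2.1]{Cao} and \cite[Lemma~1.3]{W-2010}, precisely as you anticipated. Your sketch of the underlying argument (Gaussian kernel bounds for short times, spectral-gap decay for large times, and duality for part~(ii)) is accurate and in fact goes beyond what the paper itself provides.
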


We observe that since constants are invariant under $e^{t \Delta}$ we can use \eqref{etDeltaz} writing $\bar z = \frac 1 {|\Omega|}\int_{\Omega} z\,dx$ so that we have $\int_{\Omega} (z - \bar z)\,dx=0$ (see \cite{W-2010})  .\\

\begin{lemma}\label{lemma int u < bar m}
Let $\Omega \subset \mathbb{R}^N, \ N\geq 1$, be a bounded and smooth domain, and $\lambda>0$, 
$\mu >0$, $k>1$. Then for a solution $(u,v)$ of \eqref{sys} we have 
\begin{equation}\label{int u < bar m}
\int_{\Omega} u \,dx \leq \bar{m}, \ \ \text{for all} \ t \in(0,T_{max}),
\end{equation}
with 
\begin{equation}\label{bar m}
\bar{m}= \max \Big \{\int_{\Omega} u_0\,dx, \ \Bigl(\frac{\lambda }{\mu}\ |\Omega |^{k-1}\Bigr)^{\frac{1}{k-1}}\Big \}.
\end{equation}
\end{lemma}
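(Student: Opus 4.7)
The plan is to derive a scalar ODE inequality for the mass $y(t):=\int_\Omega u(x,t)\,dx$ and apply a standard comparison argument with the associated logistic equilibrium.

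First, I would integrate the first equation of \eqref{sys} over $\Omega$. By the divergence theorem together with the homogeneous Neumann boundary conditions on $u$ and $v$, both
$\int_\Omega \Delta u\,dx$ and $\int_\Omega \nabla\cdot\bigl(u\,f(|\nabla v|^2)\nabla v\bigr)\,dx$ vanish, so that
\begin{equation*}
y'(t) \;=\; \int_\Omega g(u)\,dx \;=\; \lambda \int_\Omega u\,dx \;-\; \mu\int_\Omega u^k\,dx.
\end{equation*}
Since $k>1$, H\"older's inequality (equivalently, Jensen's inequality applied to the convex function $s\mapsto s^k$) yields
\begin{equation*}
\int_\Omega u\,dx \;\leq\; |\Omega|^{\,1-\frac{1}{k}}\Bigl(\int_\Omega u^k\,dx\Bigr)^{\frac{1}{k}},
\qquad\text{hence}\qquad
\int_\Omega u^k\,dx \;\geq\; |\Omega|^{\,1-k}\,y(t)^k.
\end{equation*}
Substituting into the identity above produces the logistic-type differential inequality
\begin{equation*}
y'(t) \;\leq\; \lambda\,y(t) \;-\; \mu\,|\Omega|^{\,1-k}\,y(t)^k
\qquad\text{for all }t\in(0,T_{\rm max}).
\end{equation*}

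Next, I would solve for the equilibrium of the right-hand side: $\lambda y_* = \mu |\Omega|^{1-k} y_*^k$ gives $y_* = \bigl(\tfrac{\lambda}{\mu}|\Omega|^{k-1}\bigr)^{1/(k-1)}$, which is precisely the second quantity in the definition of $\bar m$. Writing $\bar m=\max\{y(0),y_*\}$, a standard ODE comparison argument concludes the proof: if $y(0)\leq y_*$ then as soon as $y(t)$ tries to exceed $y_*$ the right-hand side becomes nonpositive, preventing further growth; if $y(0)>y_*$ then the right-hand side is nonpositive at $t=0$ and, by the same monotonicity observation, $y(t)$ cannot exceed $y(0)$. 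Formally one can define $t_0:=\sup\{t\in[0,T_{\rm max}):y(s)\leq \bar m\text{ for all }s\in[0,t]\}$ and use the inequality above to rule out $t_0<T_{\rm max}$. This gives \eqref{int u < bar m}.

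There is no real obstacle here: the only delicate point is ensuring the boundary terms vanish (which is immediate from the Neumann conditions) and applying the right form of Jensen/H\"older to produce the superlinear dissipation. The conclusion is then a one-line ODE comparison.
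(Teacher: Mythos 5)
Your proof is correct and follows essentially the same route as the paper: integrate the first equation over $\Omega$, use the Neumann conditions to kill the divergence terms, apply H\"older's inequality to obtain $z'\leq \lambda z-\mu|\Omega|^{1-k}z^k$ for $z=\int_\Omega u\,dx$, and conclude by ODE comparison with the logistic equilibrium. The only difference is that you spell out the comparison argument, which the paper leaves as a one-line remark.
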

\begin{proof}
From the first equation in \eqref{sys} we obtain
\begin{equation}\label{max princ bar m}
\begin{split}
\frac{d}{dt}\int_{\Omega} u\, dx = \lambda \int_{\Omega} u \,dx - \mu \int_{\Omega} u ^ k \,dx \leq \lambda \int_{\Omega} u \,dx - \mu |\Omega |^{1-k}\Big (\int_{\Omega} u  \,dx \Big)^{k}
\end{split}
\end{equation}
where, in the last term we used H$\ddot{\rm o}$lder's inequality: $\int_{\Omega} u \,dx\leq |\Omega|^{\frac{k-1}{k}} \Big(\int_{\Omega} u^k \,dx\Big)^{\frac 1 k}$.\\
From \eqref{max princ bar m} we infer that $z=\int_{\Omega} \; u dx$ satisfies 
\[
\begin{cases}
z' (t) \leq \lambda z(t) - \bar{\mu} z^k(t), \ \ \ \ \bar{\mu} =\mu |\Omega |^{1-k},  \ \ \ \text{for all} \ t\in [0,T_{max}), &\\[2pt]
z(0)=z_0. &
\end{cases}
\]
Upon an ODE comparison argument this entails that
\begin{equation*} \label{z < bar m}
z(t)\leq \bar{m}, \ \ \text{for  all} \ t \in(0,T_{max}).
\end{equation*}
This clearly proves the lemma.
\end{proof}

In Section \ref{lower bound} we will use the Gagliardo--Nirenberg inequality in the following form.
\begin{lemma}\label{lemma GN ineq}
Let\/ $\Omega$ be a  bounded and smooth domain of\/ $\mathbb{R}^N$ with $N\geq 1$. 
Let $ \mathsf{r}\geq 1$, $1\leq\mathsf{q}< \mathsf{p}\leq \infty$, $ \mathsf{s}>0$.
	Then there exists a constant
	$C_{{\rm GN}}>0$ such that 
\begin{equation} \label{GN ineq}	
		\|f\|^{\mathsf{p}}_{L^{ \mathsf{p}}(\Omega)}\leq C_{{\rm GN}} \Big(\|\nabla f\|^{\mathsf{p} a}_{L^{ \mathsf{r}}(\Omega)}
		\|f\|_{L^{ \mathsf{q}}(\Omega)}^{{\mathsf{p}}(1-a)}
	+\|f\|^{\mathsf{p}}_{L^{ \mathsf{s}}(\Omega)}\Big)
	\end{equation}
for all $f\in L^{\textsf{q}}({\Omega})$ with 
$\nabla f \in (L^{\textsf{r}}(\Omega))^N$
and
$a:=\frac{\frac{1}{ \mathsf{q}}-\frac{1}{ \mathsf{p}}}
		{\frac{1}{ \mathsf{q}}+\frac{1}{N}-
		\frac{1}{ \mathsf{r}}} \in (0,1)$. 	
\end{lemma}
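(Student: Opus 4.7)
This is the classical Gagliardo--Nirenberg interpolation inequality on a bounded smooth domain, stated in the ``additive'' form that accommodates arbitrary functions (no boundary or mean-value constraint). My plan is not to reprove it from scratch but to reduce it to two standard ingredients, which is the route found, e.g., in Nirenberg's original paper and in standard monographs on Sobolev spaces.

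First I would recall the homogeneous Gagliardo--Nirenberg inequality on $\mathbb{R}^N$,
\[
\|g\|_{L^{\mathsf p}(\mathbb{R}^N)} \leq C_0\, \|\nabla g\|_{L^{\mathsf r}(\mathbb{R}^N)}^{a}\, \|g\|_{L^{\mathsf q}(\mathbb{R}^N)}^{1-a},
\]
where $a \in (0,1)$ is forced by the scaling identity $\tfrac{1}{\mathsf p} = a\bigl(\tfrac{1}{\mathsf r} - \tfrac{1}{N}\bigr) + (1-a)\tfrac{1}{\mathsf q}$; a short rearrangement shows that this matches the formula for $a$ in the lemma. Then I would apply a Stein-type extension operator $E\colon W^{1,\mathsf r}(\Omega) \cap L^{\mathsf q}(\Omega) \to W^{1,\mathsf r}(\mathbb{R}^N) \cap L^{\mathsf q}(\mathbb{R}^N)$, supported in a fixed neighbourhood of $\bar\Omega$ and satisfying $\|Ef\|_{L^{\mathsf q}(\mathbb{R}^N)} + \|\nabla Ef\|_{L^{\mathsf r}(\mathbb{R}^N)} \leq C\bigl(\|f\|_{L^{\mathsf q}(\Omega)} + \|\nabla f\|_{L^{\mathsf r}(\Omega)}\bigr)$, to transfer the $\mathbb{R}^N$ estimate to $\Omega$. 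This produces the preliminary bound
\[
\|f\|_{L^{\mathsf p}(\Omega)} \leq C_1\,\bigl(\|\nabla f\|_{L^{\mathsf r}(\Omega)} + \|f\|_{L^{\mathsf q}(\Omega)}\bigr)^{a}\, \|f\|_{L^{\mathsf q}(\Omega)}^{1-a}.
\]

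Second, to obtain the additive $\|f\|_{L^{\mathsf s}(\Omega)}^{\mathsf p}$ correction on the right-hand side of \eqref{GN ineq}, I would split the estimate into a genuinely Sobolev contribution and a lower-order one: the $\|\nabla f\|_{L^{\mathsf r}(\Omega)}$ part already gives the first summand, while the remaining $\|f\|_{L^{\mathsf q}(\Omega)}$ piece is absorbed using H\"older's inequality on the bounded domain together with an interpolation between $L^{\mathsf q}(\Omega)$ and $L^{\mathsf s}(\Omega)$, and a Young inequality to untangle products of powers. Raising the resulting bound to the $\mathsf p$-th power and using the elementary inequality $(x+y)^{\mathsf p} \leq 2^{\mathsf p-1}(x^{\mathsf p}+y^{\mathsf p})$ yields precisely \eqref{GN ineq} with $C_{\mathrm{GN}}:= 2^{\mathsf p-1}C_1^{\mathsf p}$.

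The main obstacle, as is typical for such interpolation statements, is bookkeeping rather than substance: one must check that the exponent $a = \frac{1/\mathsf q - 1/\mathsf p}{1/\mathsf q + 1/N - 1/\mathsf r}$ coincides with the scaling exponent imposed by the $\mathbb{R}^N$ inequality, and that the hypotheses $\mathsf r \geq 1$, $1 \leq \mathsf q < \mathsf p \leq \infty$, $\mathsf s > 0$, together with $a \in (0,1)$, rule out endpoint degeneracies of the underlying Sobolev embedding. Because this is a well-established result, in practice it would be acceptable simply to cite it (rather than reproducing the two-step argument above) from a standard reference.
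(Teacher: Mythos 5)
Your proposal is correct and matches the paper's proof in substance: the paper simply quotes the additive Gagliardo--Nirenberg inequality $\|f\|_{L^{\mathsf p}(\Omega)} \leq c_{\rm GN}\bigl(\|\nabla f\|_{L^{\mathsf r}(\Omega)}^{a}\|f\|_{L^{\mathsf q}(\Omega)}^{1-a} + \|f\|_{L^{\mathsf s}(\Omega)}\bigr)$ from Nirenberg's paper and then raises it to the power $\mathsf p$ via $(\mathsf a+\mathsf b)^{\mathsf p}\leq 2^{\mathsf p}(\mathsf a^{\mathsf p}+\mathsf b^{\mathsf p})$, which is exactly the concluding step of your argument. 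Your extra extension-operator derivation of the additive form is a legitimate (if more detailed) way of justifying the cited inequality, but the paper treats that inequality as known and does not reprove it.
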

\begin{proof}
Following from the Gagliardo--Nirenberg inequality 
(see \cite{Nir} for more details): 
\begin{equation*}\label{GNfirst} 	
		\|f\|^{ \mathsf{p}}_{L^{ \mathsf{p}}(\Omega)}\leq \Big[c_{{\rm GN}} \Big(\|\nabla f\|^{a}_{L^{ \mathsf{r}}(\Omega)}
		\|f\|_{L^{ \mathsf{q}}(\Omega)}^{1-a} +\|f\|_{L^{ \mathsf{s}}(\Omega)}\Big) \Big]^{ \mathsf{p}},
	\end{equation*}
with some $c_{{\rm GN}}>0$, and then from the inequality
\begin{equation*}
(\mathsf{a}+\mathsf{b})^{\mathsf{p}} \leq 2^{\mathsf{p}}(\mathsf{a}^{\mathsf{p}} + 
\mathsf{b}^{\mathsf{p}})\quad {\rm for\ any}\ \mathsf{a}, \mathsf{b}\geq 0,
\ \mathsf{p}>0,
\end{equation*}
we arrive to \eqref{GN ineq} with $C_{\rm GN}= 2^{\mathsf{p}} c_{\rm GN}^{\mathsf{p}}$.
\end{proof}

\begin{lemma}\label{lemma y(t)}
Let $\beta >0$, $\delta >0$, $\gamma >0$ and suppose that for some $T>0$,  $y\in C^0([0,T])$ is a nonnegative function satisfying 
\begin{align*}
y(t) \geq \beta + \delta \int_0^t y^{1 + \gamma}  (\tau) \,d \tau \quad  \forall\, t\in(0,T).
\end{align*}
Then $T\leq \frac 1 {\gamma \delta \beta^{\gamma}}.$
\end{lemma}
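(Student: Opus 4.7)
The plan is to reduce the integral inequality to a Bernoulli-type differential inequality for an auxiliary function, and then integrate. Define
\[
Y(t) := \beta + \delta \int_0^t y^{1+\gamma}(\tau)\, d\tau, \qquad t \in [0,T].
\]
By hypothesis $y(t) \geq Y(t)$, and since $y$ is continuous on $[0,T]$, $Y$ belongs to $C^1([0,T])$ with $Y(0)=\beta>0$ and $Y'(t) = \delta y(t)^{1+\gamma} \geq \delta Y(t)^{1+\gamma}$.

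Next I would divide by the strictly positive quantity $Y(t)^{1+\gamma}$ (positivity follows from $Y(0)=\beta>0$ together with $Y'\geq 0$, so $Y$ is nondecreasing and bounded below by $\beta$). This yields
\[
\frac{Y'(t)}{Y(t)^{1+\gamma}} \geq \delta, \quad \text{i.e.,} \quad -\frac{1}{\gamma}\,\frac{d}{dt}\!\left(Y(t)^{-\gamma}\right) \geq \delta.
\]
Integrating from $0$ to any $t \in (0,T)$ and using $Y(0)=\beta$ gives
\[
\frac{1}{\gamma \beta^{\gamma}} - \frac{1}{\gamma Y(t)^{\gamma}} \geq \delta t.
\]

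Since $Y(t)^{-\gamma} > 0$, dropping that nonnegative term on the left yields $\delta t \leq \frac{1}{\gamma \beta^{\gamma}}$, hence $t \leq \frac{1}{\gamma \delta \beta^{\gamma}}$. Because this bound holds for every $t \in (0,T)$, we conclude $T \leq \frac{1}{\gamma \delta \beta^{\gamma}}$, as claimed.

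There is essentially no serious obstacle here; the only point requiring a moment of care is to observe that the monotonicity of $Y$, which is built into the definition as an integral of a nonnegative integrand, keeps $Y(t) \geq \beta > 0$ on $[0,T]$, so that the division by $Y^{1+\gamma}$ and the antiderivative $-\frac{1}{\gamma}Y^{-\gamma}$ are legitimate throughout the interval.
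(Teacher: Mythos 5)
Your proof is correct and complete; the paper itself does not write out a proof but defers to Winkler's Lemma 2.4 in \cite{W-2011}, whose argument is exactly the one you give (pass to the auxiliary function $Y$, derive $Y'\geq\delta Y^{1+\gamma}$, and integrate the resulting Bernoulli-type inequality). Nothing further is needed.
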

For the proof see \cite[Lemma 2.4]{W-2011}.


\section{Blow-up in $L^{\infty}$-norm}\label{BlowUp in L^{infty}}

{\bf Transformation in nonlocal scalar parabolic equation:}\\
Assume $\Omega =B_R(0)$, $R>0$ and  $u_0\in C^0(\bar \Omega)$ is radially symmetric with respect to $x=0$. If $(u,v)$ is the corresponding radial solution in $\Omega \times (0,T_{max})$ asserted by Lemma~\ref{lemma BU}, we write $u=u(r,t)$ and $v=v(r,t)$ with $r=|x|\in [0, R]$.\\
Following J$\rm {\ddot a}$ger--Luckhaus (\cite{JL}) we introduce the mass accumulation function
\begin{equation} \label{w}
w(s,t):= \int_0^{s^{\frac 1 N}} \rho^{N-1} u(\rho, t) \,d \rho, \ \ s= r^N \in [0,R^N], \ \ t\in[0, T_{max}).
\end{equation}
We have
\begin{equation*} \label{w_s}
w_s(s,t)= \frac 1 N u(s^{\frac 1 N} , t) \geq 0, \quad w_{ss} (s,t) = \frac 1 {N^2} s^{\frac 1 N - 1} u_r(s^{\frac 1 N} , t).
\end{equation*}
From the second equation in \eqref{sys} we deduce
\begin{equation*}\label{v_r}
\frac 1 {r^{N-1}} \big(r^{N-1} v_r(r,t) \big)_r= m(t) -  u
\end{equation*}
and
\begin{equation*}\label{v_r bis}
r^{N-1} v_r(r,t)=  m(t) \int_0^{r} \rho^{N-1} \,d\rho  -  \int_0^{r} \rho^{N-1} u(\rho, t) \,d\rho =  \frac {m(t) r^N}{N} - \int_0^{r} \rho^{N-1} u(\rho, t) \,d\rho.
\end{equation*}
Using \eqref{sys} we obtain 
\begin{equation*}\label{w_t}
\begin{aligned}
w_t(s,t)= \,& \int_0^{s^{\frac 1 N}} \rho^{N-1} u_t(\rho, t) \,d \rho \\
= &\int_0^{s^{\frac 1 N}} \big( \rho^{N-1} u_r \big)_r (\rho, t) \,d \rho - \int_0^{s^{\frac 1 N}} \Big( \rho^{N-1} u(\rho, t) v_r f( v^2_r) \Big)_r \,d \rho \\
&+ \lambda \int_0^{s^{\frac 1 N}} \rho^{N-1} u(\rho, t) \,d \rho - \mu\int_0^{s^{\frac 1 N}} \rho^{N-1} u^k(\rho, t) \,d \rho \\
= &\, s^{1 - \frac 1 N} u_r(s^{ \frac 1 N},t) - s^{1 - \frac 1 N} u v_r f(v^2_r(s^{\frac 1 N},t))\\ 
&+ \lambda \int_0^{s^{\frac 1 N}} \rho^{N-1} u(\rho, t) \,d \rho - \mu\int_0^{s^{\frac 1 N}} \rho^{N-1} u^k(\rho, t) \,d \rho\\
= &\,N^2 s^{2-\frac 2 N} w_{ss} + N w_s \Bigl(w- \frac {m(t)} N s\Bigr) f\Bigl(s^{\frac 2 N -2} (w- \frac {m(t)} N s)^2\Bigr)\\
&+ \lambda w - \mu N^{k-1} \int_0^{s}w^k_s (\sigma,t)\,d\sigma\\
\end{aligned}
\end{equation*}
%
%
and 
 \begin{equation} \label{parabproblem}
\begin{cases}
w_t = N^2s^{2-\frac 2 N} w_{ss} + N(w-\frac{m(t)} N s ) w_s f \big(s^{\frac 2 N -2} (w- \frac{m(t)}{N} s)^2\big) \\[6pt]
\qquad + \lambda w-\mu N^{k-1}  \int_0^s w^k_s (\sigma,t)\,d\sigma,
    \ \ \  s \in (0, R^N) , \ t \in (0, T_{max}),\\[6pt]
w(0,t)=0, \ \ \   \ \  w(R^N,t)= \frac{\mu R^N}{N},   \ \ \  t \in (0, T_{max}), \\[6pt]
w(s,0)=w_0(s), \ \ \  s\in (0,R^N)
\end{cases}
\end{equation}
with $w_0(s)= \int_0^{s^{\frac 1 N}} \rho^{N-1} u_0(\rho) d \rho , \ \ s\in[0,R^N]$.\\

Our aim is to prove that the functional $\int_0^{R^N} s^{-a}  w^ b (s,t) \,ds$, for suitable $a\in(0,1)$ and $b\in(0,1)$ blows up in finite time.\\
To this end, we use the estimate $w_s \leq \frac w s$ proved by Fuest (\cite[Lemma 3.3]{Fuest}):

\begin{lemma}\label{Fuest}
Assume that $u_0$ satisfies \eqref{u_0}. For all $s\in[0, R^N]$ and $t\in(0,T_{max})$,
\begin{equation}\label{w_s<w/s}
w_s(s,t) \leq \frac{w(s,t)} s \leq w_s(0,t)
\end{equation}
holds.
\end{lemma}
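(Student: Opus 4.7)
The plan is to reduce both inequalities to the radial monotonicity property $u_r(\cdot, t) \leq 0$, which, by the relation $w_{ss}(s,t)=\frac{1}{N^2} s^{\frac{1}{N}-1} u_r(s^{\frac{1}{N}},t)$ recorded just after \eqref{w}, is equivalent to concavity of $w(\cdot,t)$ in $s$. Setting $s=r^N$ and using $w_s(s,t)=\frac{1}{N}u(r,t)$ together with $w(s,t)=\int_0^{r}\rho^{N-1} u(\rho,t)\,d\rho$, the two target inequalities rewrite as the pointwise bound
\[
\frac{r^N}{N}\,u(r,t)\;\leq\;\int_0^{r}\rho^{N-1}u(\rho,t)\,d\rho\;\leq\;\frac{r^N}{N}\,u(0,t).
\]
If $u(\cdot,t)$ is non-increasing in $r$, then $u(\rho,t)\geq u(r,t)$ for $\rho\in[0,r]$ yields the left inequality, and $u(\rho,t)\leq u(0,t)$ yields the right one.

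It therefore remains to establish preservation of radial monotonicity from $t=0$ to all $t\in(0,T_{max})$. By the hypothesis in Theorem~\ref{BULinfty}, one has $u_{0,r}\leq 0$ on $[0,R]$. To propagate this to positive times, I would differentiate the radial form of the first equation of \eqref{sys},
\[
u_t=u_{rr}+\frac{N-1}{r}u_r-\bigl(u f(v_r^2)v_r\bigr)_r-\frac{N-1}{r}\,u f(v_r^2)v_r+g(u),
\]
in $r$, eliminate $v_{rr}$ through the Poisson-type identity $v_{rr}+\frac{N-1}{r}v_r=m(t)-u$ coming from the second equation of \eqref{sys}, and arrive at a linear parabolic equation
\[
\phi_t=\phi_{rr}+a(r,t)\phi_r+b(r,t)\phi,\qquad \phi(0,t)=\phi(R,t)=0,
\]
for $\phi:=u_r$, with coefficients $a,b$ depending smoothly on $u$, $v_r$ and the data. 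The boundary condition at $r=0$ follows from the radial $C^1$-regularity of $u$, and the one at $r=R$ from the Neumann condition $u_r=0$. Since $\phi(\cdot,0)\leq 0$, the parabolic comparison principle then yields $\phi\leq 0$ on $[0,R]\times[0,T_{max})$, as required.

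The main technical obstacle lies in the flux-limitation factor $f(v_r^2)=k_f(1+v_r^2)^{-\alpha}$: its $r$-derivative produces contributions of the form $v_r v_{rr} f'(v_r^2)$ whose sign is not obvious a priori. Eliminating $v_{rr}$ via the Poisson-type equation converts these into first-order and zeroth-order expressions in $\phi$ that can be absorbed into $a(r,t)$ and $b(r,t)$. The logistic source $g(u)=\lambda u-\mu u^k$ contributes only the harmless term $(\lambda-k\mu u^{k-1})\phi$ to $b$, which does not affect the sign structure. Once these algebraic simplifications are carried out and the resulting coefficients are shown to be regular on $(0,R)\times(0,T_{max})$, the maximum principle closes the argument and both claimed inequalities follow from Step~1.
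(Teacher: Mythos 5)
Your proposal is correct and follows essentially the same route as the paper, which simply cites \cite[Lemma 2.3]{BW2} and \cite[Lemma 3.7]{Fuest2} for the preservation of $u_r\le 0$ (your maximum-principle argument for $\phi=u_r$) and \cite{Fuest} for the reduction of \eqref{w_s<w/s} to that monotonicity (your pointwise integral comparison). You have in fact supplied more detail than the paper's one-line proof.
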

\begin{proof}
By a similar way as in \cite[Lemma 2.3]{BW2} where $\alpha = \frac 1 2$  
and as in \cite[Lemma 3.7]{Fuest2}, 
we can show that $u_r \leq 0$ in $(0,R)\times (0, T_{max})$ and following the steps in \cite{Fuest} we arrive to \eqref{w_s<w/s}.
\end{proof}
The next step is to prove that the functional $\int_0^{R^N} s^{-a}  w^ b (s,t) \,ds$ satisfies a differential inequality. First we obtain the following estimate.

\begin{lemma}\label{lemma s^-aw^b}
Assume Lemma \ref{lemma int u < bar m} and $\Omega = B_R(0)\subset \mathbb{R}^N$ with some $R>0$ and $N\geq 2$. Let $u_0\in C^0(\bar \Omega)$ be radial, and let $(u,v)$ denote the solution of \eqref{sys} in $\Omega \times(0, T_{max})$. Then for all $a>0$ and $b\in (0,1)$, the function $w$ defined in \eqref{w} satisfies
\begin{align}\label{s^-aw^b}
\notag \frac 1 b \int_0^{R^N} s^{-a} w^b(s,t) \,ds 
\,\geq\, &\frac 1 b \int_0^{R^N} s^{-a} w_0^b(s)\, ds  \\
\notag &- k_f \bar{m}|\Omega|^{-1} \int_0^t   \int_0^{R^N} s^{1-a} w^{b-1} w_s \,ds d\tau\\
\notag &+ \frac{a N k_f}{2(b+1)} \bar C \int_0^t \int_0^{R^N}s^{-a-1} w^{b+1} \,ds d\tau\\
\notag & +  \frac 1 2 N k_f  \bar C \int_0^t \int_0^{R^N} s^{-a} w^{b} w_s  \,ds d\tau \\
\notag &+ N^2  (1-b)\int_0^t \int_0^{R^N}  s^{2- \frac 2N  -a }w^{b-2} w^2_{s}\,ds d\tau\\
\notag & - 2N(N-1) \int_0^t  \int_0^{R^N} s^{1- \frac 2N -a} w^{b-1} w_s \,ds d\tau\\
&- \mu N^{k-1}\int_0^t \int_0^{R^N} s^{-a} w^{b-1} \Bigl(\int_0^s w_s^k d\sigma \Bigr) \,ds d\tau,
\end{align}
with $\bar C:=  \Big[\frac{N^{2}}{N^2 + 2|\Omega|^{-2}{\bar m}^2 R^2}\Big]^{\alpha}$, and ${\bar m}$ in \eqref{bar m}
\end{lemma}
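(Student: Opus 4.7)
I would differentiate the functional
\[
\Phi(t) := \frac{1}{b}\int_0^{R^N} s^{-a}\,w^b(s,t)\,ds
\]
in time, substitute $w_t$ from \eqref{parabproblem}, and handle the four resulting contributions (diffusion, chemotaxis, the logistic source $+\lambda w$, and the absorption $-\mu N^{k-1}\int_0^s w_s^k\,d\sigma$) separately. Integrating the resulting differential inequality in time from $0$ to $t$ then produces \eqref{s^-aw^b}.

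For the diffusive contribution $N^2 \int_0^{R^N} s^{2-\frac{2}{N}-a}\, w^{b-1} w_{ss}\,ds$ I would integrate by parts once in $s$. The boundary term at $s=R^N$ is nonnegative (since $w_s\geq 0$) and may be discarded in the lower bound, while the one at $s=0$ vanishes under the implicit parameter restrictions ensuring $s^{2-\frac{2}{N}-a}w^{b-1}w_s\to 0$ as $s\to 0^+$. This releases the positive principal term $N^2(1-b)\int s^{2-\frac{2}{N}-a} w^{b-2} w_s^2\,ds$ (whose sign uses $b\in(0,1)$) together with a mixed term of coefficient $-N^2(2-a-\frac{2}{N})=-2N(N-1)+aN^2$; discarding the nonnegative $+aN^2$ piece recovers exactly the coefficient $-2N(N-1)$ in the claim.

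For the chemotactic contribution I would split $N\bigl(w-\frac{m(t)s}{N}\bigr)w_s f = Nww_s f - m(t)sw_s f$. On the second summand I invoke $f\leq k_f$ and $m(t)\leq \bar m/|\Omega|$ (the latter from Lemma~\ref{lemma int u < bar m}), giving the first negative term of the claim. On the first summand I apply the key lower bound $f(|\nabla v|^2)\geq k_f\bar C$, whose constant is calibrated via the representation $|\nabla v|^2 = s^{\frac{2}{N}-2}\phi^2$ with $\phi:=w-\frac{m(t)s}{N}$, together with the bounds $0\leq \phi\leq w$, $\phi\leq m(t)(R^N-s)/N$, and the mass control of Lemma~\ref{lemma int u < bar m}. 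This yields $Nk_f\bar C\int s^{-a} w^b w_s\,ds$, which I then split symmetrically as the sum of two equal halves and rewrite one half using the integration-by-parts identity
\[
\int_0^{R^N} s^{-a} w^b w_s\,ds = \frac{1}{b+1}\bigl[s^{-a} w^{b+1}\bigr]_0^{R^N} + \frac{a}{b+1}\int_0^{R^N} s^{-a-1} w^{b+1}\,ds,
\]
discarding the positive boundary contribution at $s=R^N$. This produces precisely the two chemotactic terms $\frac{aNk_f\bar C}{2(b+1)}\int s^{-a-1} w^{b+1}\,ds$ and $\frac{1}{2}Nk_f\bar C\int s^{-a} w^b w_s\,ds$ of \eqref{s^-aw^b}.

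The logistic production $\lambda w\geq 0$ is dropped by positivity and the absorbing term is carried through unchanged; integration in $t$ then closes the argument. The main obstacle will be the rigorous justification of the pointwise lower bound $f(|\nabla v|^2)\geq k_f\bar C$, where the precise form of $\bar C$ is forced by the worst-case estimate of $|\nabla v|^2$ extractable from the structural bounds on $\phi$ together with the total-mass control $\int_\Omega u \leq \bar m$. Secondary technical points include verifying that all boundary contributions vanish at $s=0$ under the implicit parameter restriction $a<b+1$, and that the integrals involved are convergent.
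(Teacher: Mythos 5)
Your proposal follows essentially the same route as the paper: test \eqref{parabproblem} with $s^{-a}w^{b-1}$, integrate the diffusive term by parts to release $N^2(1-b)\int s^{2-\frac2N-a}w^{b-2}w_s^2$ and the $-2N(N-1)$ cross term, split the chemotactic term using $m(t)\le \bar m|\Omega|^{-1}$ with $f\le k_f$ on one piece and $f\ge k_f\bar C$ on the other (half of which is integrated by parts to produce the $\frac{aNk_f}{2(b+1)}\bar C$ term), drop $\lambda w\ge 0$, carry the absorption term, and integrate in time. The only difference is technical: the paper works with the regularized weight $(s+\epsilon)^{-a}$ and passes to the limit $\epsilon\searrow 0$ by monotone convergence, which disposes of the boundary and integrability issues at $s=0$ that you defer to a final verification.
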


\begin{proof}
Following the steps in \cite[Lemma 2.1]{W-2011} we  multiply the first equation in \eqref{parabproblem}  by $(s+\epsilon)^{-a} w^{b-1}(s,\tau)$ , $\epsilon >0$, and integrate over $s\in(0,R^N)$. We obtain 
\begin{align}\label{w+eps}
\notag &\frac 1 b \frac d {dt} \int_0^{R^N} (s+\epsilon)^{-a} w^b(s,t) \,ds \\
\notag &\geq N^2 \int_0^{R^N}  s^{2- \frac 2N }(s+\epsilon)^{-a} w^{b-1} w_{ss}\,ds \\
\notag&\quad\ + N\int_0^{R^N} (s+\epsilon)^{-a} w^{b-1} w_s\Bigl(w- \frac{m(t)}{N}s\Bigr) f\Bigl(s^{\frac 2 N -2}\Bigl(w-\frac{m(t)}N s\Bigr)^2\Bigr)\,ds\\
&\quad\ - \mu N^{k-1}\int_0^{R^N} (s+\epsilon)^{-a} w^{b-1} \Bigl(\int_0^s w_s^k \,d\sigma \Bigr) \,ds = \mathcal{I}_1 + \mathcal{I}_2 + \mathcal{I}_3.
\end{align}
Integrating by part we have
\begin{align}\label{I_1}
\notag \mathcal{I}_1&= N^2 \int_0^{R^N}  s^{2- \frac 2N }(s+\epsilon)^{-a} w^{b-1} w_{ss}\,ds \\
\notag &= N^2s^{2- \frac 2N }(s+\epsilon)^{-a} w^{b-1} w_{s}\big|_{0}^{R^N}  - N^2  (b-1)\int_0^{R^N}  s^{2- \frac 2N }(s+\epsilon)^{-a} w^{b-2} w^2_{s}\,ds\\ 
\notag & \quad - N^2 \int_0^{R^N}\frac d {ds} \big( s^{2- \frac 2N }(s+\epsilon)^{-a}\big) w^{b-1} w_{s} \,ds \\
\notag &\geq N^2  (1-b)\int_0^{R^N}  s^{2- \frac 2N }(s+\epsilon)^{-a} w^{b-2} w^2_{s}\,ds \\
 & \quad - 2N(N-1)  \int_0^{R^N} s^{1- \frac 2N} (s+ \epsilon)^{-a} w^{b-1} w_s \,ds
\end{align}
where in the last step we used  $\frac d{ds} \big( s^{2- \frac 2N }(s+\epsilon)^{-a}\big) =(2- \frac 2 N ) s^{1- \frac 2 N} (s+\epsilon)^{-a} - a s^{2- \frac 2 N}(s+\epsilon)^{-a-1} \leq (2-\frac 2 N) s^{1-\frac 2N}(s+\epsilon)^{-a}$.
\vskip.2cm
In $\mathcal{I}_2$ we have
\begin{align*}
\notag \mathcal{I}_2&= N\int_0^{R^N} (s+\epsilon)^{-a} w^{b-1} w_s\Bigl(w- \frac{m(t)}{N}s\Bigr) f\Bigl(s^{\frac 2 N -2}\Bigl(w-\frac{m(t)}N s\Bigr)^2\Bigr)\,ds\\
\notag &= N\int_0^{R^N} (s+\epsilon)^{-a} w^{b} w_s f\big(s^{\frac 2 N -2}(w-\frac{m(t)}N s)^2\big)\,ds \\
&\quad - \int_0^{R^N} s (s+\epsilon)^{-a} w^{b-1} w_s m(t) f\Bigl(s^{\frac 2 N -2}\Bigl(w-\frac{m(t)}N s\Bigr)^2\Bigr)\,ds=\mathcal{I}_{21} + \mathcal{I}_{22}.
\end{align*}
Taking into account that $u\geq 0$ we have $w_s \geq 0$ in $(0, R^N) \times (0,T_{max})$ and from the boundary condition at $s=R^N$ we have $w(s,t) \leq \frac{m(t) R^N}{N}$ for all $s\in[0, R^N]$ and $t\in [0, T_{max})$.
\vskip.2cm
By using $w\leq \frac{m(t) R^N}{N}$ and $s\leq R^N$, using \eqref{int u < bar m} we arrive at 
\[
 \Bigl(\frac {m(t)}N s - w\Bigr)^2 \leq \frac{m^2(t)}{N^2} s^2 + w^2 \leq  2 \frac{m^2(t)}{N^2} R^{2N} \leq 2 \frac{|\Omega|^2\bar m^2}{N^2} R^{2N}:= \bar{M}^2
\]
so that
\[
  f\Bigl(s^{\frac 2 N -2}\Bigl(w-\frac{m(t)}N s\Bigr)^2\Bigr)= k_f \frac 1 {\big[1+ s^{\frac 2 N -2} ( \frac {m(t)}N s - w) ^2\big]^{\alpha}}\geq k_f \frac 1 {\big[1+ \bar{M}^2\big]^{\alpha}}.
\]

We now split $\mathcal{I}_{21}= \frac{\mathcal{I}_{21}} 2 + \frac{\mathcal{I}_{21}}2$. 
Computing 
\begin{align*}
\notag \frac{\mathcal{I}_{21} }{2}
&= \frac 1 2 N k_f  \int_0^{R^N} (s+\epsilon)^{-a} w^{b} w_s f\Bigl(s^{\frac 2 N -2}\Bigl(w-\frac{m(t)}N s\Bigr)^2\Bigr)\,ds\\
& \geq   \frac 1 2 N k_f  \int_0^{R^N} (s+\epsilon)^{-a} w^{b} w_s \frac 1 {\big[1+ \bar{M}^2\big]^{\alpha}}\, ds
\end{align*}
and integrating by parts we get
\begin{align*}
\frac 1 2 N k_f  \int_0^{R^N} (s+\epsilon)^{-a} w^{b} w_s \frac 1 {\big[1+\bar{M}^2\big]^{\alpha}} \,ds
\notag & = \frac {N k_f}{2(b+1)} (s+\epsilon)^{-a} w^{b+1} \frac{1}{[1+ \bar{M}^2]^{\alpha}} \Big|_0^{R^N} \\[6pt]
\notag &\quad - \frac{N k_f}{2(b+1)} \int_0^{R^N}  \frac{d}{ds} \Big(\frac{(s+\epsilon)^{-a}}{\big[1+\bar{M}^2\big]^{\alpha}} \Big) w^{b+1}\,ds\\[6pt]
\notag &\geq -\frac{N k_f}{2(b+1)} \int_0^{R^N}  \frac{d}{ds} \Big(\frac{(s+\epsilon)^{-a}}{\big[1+\bar{M}^2\big]^{\alpha}} \Big) w^{b+1}\,ds\\[6pt]
 &= \frac{a N k_f}{2(b+1)} \int_0^{R^N}(s+\epsilon)^{-a-1} \frac{w^{b+1}}{\big[1+\bar{M}^2\big]^{\alpha}}\,ds.
\end{align*}
This leads to
\begin{align}\label{I21/2}
\frac{\mathcal{I}_{21}} 2  \geq \frac{a N k_f}{2(b+1)} \bar C\int_0^{R^N}(s+\epsilon)^{-a-1} w^{b+1} \,ds
\end{align}
 with $\bar C = \frac{1}{[1+ \bar{M}^2]^{\alpha}}$.
\vskip.2cm
Now, since $\frac 1 {\big[1+ s^{\frac 2 N -2} ( \frac {m(t)}N s - w) ^2\big]^{\alpha}} \leq 1$, we obtain
\begin{align}\label{I22}
\notag \mathcal{I}_{22}&= - \int_0^{R^N} s (s+\epsilon)^{-a} w^{b-1} w_s m(t) f\Bigl(s^{\frac 2 N -2}\Bigl(w-\frac{m(t)}N s\Bigr)^2\Bigr)\,ds\\
\notag&= - k_f \int_0^{R^N} s (s+\epsilon)^{-a} w^{b-1} w_s m(t) \frac 1 {\big[1+ s^{\frac 2 N -2} ( \frac {m(t)}N s - w) ^2\big]^{\alpha}} \,ds \\
&  \geq - k_f \int_0^{R^N} s (s+\epsilon)^{-a} w^{b-1} w_s m(t) \,ds \geq  - k_f \bar m |\Omega| \int_0^{R^N} s (s+\epsilon)^{-a} w^{b-1} w_s\, ds,
\end{align}
where in the last inequality we used \eqref{int u < bar m}.\\
Replacing \eqref{I_1}, \eqref{I21/2} and \eqref{I22} in  \eqref{w+eps} and integrating from $0$ to $t\in(0,T_{max})$ we arrive to
\begin{align}
\notag &\frac 1 b \int_0^{R^N} (s+\epsilon)^{-a} w^b(s,t) \,ds \geq \frac 1 b \int_0^{R^N} (s+\epsilon)^{-a} w_0^b(s)\, ds \\
\notag & - k_f \bar m |\Omega| \int_0^t  \int_0^{R^N} s(s+\epsilon)^{-a} w^{b-1} w_s \,ds d\tau\\
\notag & + \frac{a N k_f}{2(b+1)} \bar C \int_0^t \int_0^{R^N}(s+\epsilon)^{-a-1} w^{b+1} \,ds d\tau\\
\notag & +  \frac 1 2 N k_f  \bar C \int_0^t \int_0^{R^N} (s+\epsilon)^{-a} w^{b} w_s \,ds d\tau \\
\notag &+ N^2  (1-b)\int_0^t \int_0^{R^N}  s^{2- \frac 2N }(s+\epsilon)^{-a} w^{b-2} w^2_{s}\,ds d\tau\\
\notag & - 2N(N-1) \int_0^t  \int_0^{R^N} s^{1- \frac 2N} (s+ \epsilon)^{-a} w^{b-1} w_s\, ds d\tau\\
\notag
&- \mu N^{k-1}\int_0^t \int_0^{R^N} (s+\epsilon)^{-a} w^{b-1} \Bigl(\int_0^s w_s^k d\sigma \Bigr) \,ds d\tau.
\end{align}
Now, from the monotone convergence theorem, taking $\epsilon \searrow 0$ arrive at \eqref{s^-aw^b}
\end{proof}
Our aim is to  construct an integral inequality for $y(t)= \int_0^{R^N} s^{-a} w^b(s,t)\,ds, \ t\in(0,T_{max}) $ which ensure that $y(t)$ blows up in finite time inducing the chemotactic collapse of the solution of \eqref{sys}.\\ To this end, we estimate each term in \eqref{s^-aw^b}.\\
In \eqref{s^-aw^b} we assume $c_1:= \min \bigl\{ N^2 (1-b), \, \frac{a N k_f}{2(b+1)} \bar C \bigr\}$  to obtain  
\begin{align}\label{s^-aw^b bis}
\notag \frac 1 b \int_0^{R^N} s^{-a} w^b(s,t) \,ds 
&\geq \frac 1 b \int_0^{R^N} s^{-a} w_0^b(s)\, ds + c_1 \int_0^t \int_0^{R^N}s^{-a-1} w^{b+1} \,ds d\tau\\ 
\notag &\quad +   \frac 1 2 N k_f  \bar C \int_0^t \int_0^{R^N} s^{-a} w^{b} w_s \,ds d\tau \\
\notag &\quad+ c_1 \int_0^t \int_0^{R^N}  s^{2- \frac 2N  -a }w^{b-2} w^2_{s}\,ds d\tau\\
\notag &\quad 
- k_f \bar m |\Omega|^{-1} \int_0^t  \int_0^{R^N} s^{1-a} w^{b-1} w_s \,ds d\tau\\
\notag &\quad - 2N(N-1) \int_0^t  \int_0^{R^N} s^{1- \frac 2N -a} w^{b-1} w_s \,ds d\tau\\
\notag &\quad - \mu N^{k-1}\int_0^t \int_0^{R^N} s^{-a} w^{b-1} \Bigl(\int_0^s w_s^k \,d\sigma \Bigr)\, ds d\tau\\[2mm]
 & = H_1 + H_2 + H_3 + H_4 - H_5 -H_6 - H_7, \ \text{for all} \ t\in(0,T_{max}).
\end{align}

\vspace{2mm}

\begin{lemma}\label{H_5 H_6}
Let $H_5$ and $H_6$ defined as in \eqref{s^-aw^b bis}. If 
\begin{equation} \label{a}
0<a<\frac{N-2}N (b+1),
\end{equation}
then
\begin{align} \label{H_5}
H_5 & \leq  \frac 1 2 H_4   +  \frac 1 4 H_2+ c_4 t
\end{align}
\begin{align} \label{H_6}
H_6 \leq  \frac 1 2 H_4 + \frac 1 4 H_2  + c_6 t, \ \text{for all} \ t\in(0,T_{max}), 
\end{align}
with $c_4, \ c_6 >0$ and $H_2, \, H_4$ defined in \eqref{s^-aw^b bis}.
\end{lemma}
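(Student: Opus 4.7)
The plan is to apply Young's inequality twice, interpolating the integrands of $H_5$ and $H_6$ between the integrand of $H_4$ (quadratic in $w_s$, with weight $s^{2-\frac 2N-a}w^{b-2}$) and that of $H_2$ (free of $w_s$, with weight $s^{-a-1}w^{b+1}$), absorbing the cross term as a pure power of $s$ whose $s$-integral is finite precisely under \eqref{a}.

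For $H_6$, I would factor the integrand as
\[
s^{1-\frac 2N-a}\, w^{b-1} w_s = \bigl[s^{\frac 12(2-\frac 2N-a)}\, w^{\frac{b-2}{2}} w_s\bigr]\cdot \bigl[s^{-\frac 1N-\frac a2}\, w^{\frac b2}\bigr],
\]
and apply the pointwise bound $AB\le\varepsilon A^2+\tfrac{1}{4\varepsilon}B^2$. The square of the first bracket reproduces the integrand of $H_4$ (up to the constant $c_1$), and a sufficiently small choice $\varepsilon_1$ makes its coefficient at most $1/2$. The second square leaves a residual $C_{\varepsilon_1}\iint s^{-\frac 2N-a}w^b\,ds d\tau$, which is handled by a second Young step with conjugate exponents $p=\frac{b+1}{b}$, $q=b+1$: write
\[
s^{-\frac 2N-a}\, w^b = \bigl[s^{-(a+1)\frac{b}{b+1}}\, w^b\bigr]\cdot s^{-\frac 2N+\frac{b-a}{b+1}},
\]
so that the $p$-th power of the bracket is exactly $s^{-a-1}w^{b+1}$, the integrand of $H_2$. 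This yields
\[
s^{-\frac 2N-a}\, w^b \le \varepsilon_2\, s^{-a-1}\, w^{b+1} + C_{\varepsilon_2}\, s^{q\alpha_2},\qquad \alpha_2:=-\tfrac 2N+\tfrac{b-a}{b+1}.
\]
Integrability of $s^{q\alpha_2}$ near $s=0$ requires $q\alpha_2>-1$, which rearranges exactly to $a<\frac{N-2}{N}(b+1)$ --- this is precisely hypothesis \eqref{a}. Tuning $\varepsilon_2$ so that the coefficient of $H_2$ becomes $1/4$ and noting that the residual $s$-integral gives a constant in time produces \eqref{H_6} with some $c_6>0$.

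The $H_5$ estimate is parallel: I would factor
\[
s^{1-a}\, w^{b-1} w_s = \bigl[s^{\frac 12(2-\frac 2N-a)}\, w^{\frac{b-2}{2}} w_s\bigr]\cdot \bigl[s^{\frac 1N-\frac a2}\, w^{\frac b2}\bigr]
\]
and run the same two-step Young argument. The only difference is that the residual $s$-power after the second Young step is $s^{q\tilde\alpha_2}$ with $\tilde\alpha_2=\tfrac 2N+\tfrac{b-a}{b+1}$, whose integrability condition $a<(b+1)\frac{N+2}{N}$ is strictly weaker than \eqref{a} and is therefore automatic.

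The main point --- and the only substantive check --- is that hypothesis \eqref{a} is exactly what makes the $H_6$ step work: it is the binding constraint, while the $H_5$ step comes for free. I do not expect any real obstacle beyond careful bookkeeping; the constants $c_4$, $c_6$ depend explicitly on $N$, $a$, $b$, $R$, $\bar m$, $k_f$, $|\Omega|$ and on the chosen Young parameters, but no delicate estimate is required.
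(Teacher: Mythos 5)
Your proposal is correct and follows essentially the same route as the paper: a first Young step absorbing $w_s$ into the $H_4$-integrand, a second Young step with exponents $\tfrac{b+1}{b}$ and $b+1$ absorbing the remainder into the $H_2$-integrand, and a residual pure power of $s$ whose integrability near $s=0$ is exactly condition \eqref{a} for $H_6$ (residual exponent $-\tfrac 2N - a + \tfrac{N-2}{N}b > -1$) and is automatic for $H_5$. Your identification of \eqref{a} as the binding constraint coming from the $H_6$ term, with $H_5$ needing only the weaker $a<\tfrac{N+2}{N}(b+1)$, matches the paper's computation precisely.
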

\begin{proof}
Using Young's inequality we obtain
\begin{align*} 
\notag H_5 &=k_f \bar m |\Omega|^{-1} \int_0^t  \int_0^{R^N} s^{1-a} w^{b-1} w_s \,ds d\tau \\
& \leq \frac{c_1} 2 \int_0^t \int_0^{R^N} s^{2- \frac 2 N  -a} w^{b-2} w^2_s \,ds d\tau 
  + c_2 \int_0^t \int_0^{R^N} s^{\frac 2 N -a} w^b \,ds d\tau \\
& \leq \frac {c_1} 2  \int_0^t \int_0^{R^N} s^{2- \frac 2 N  -a} w^{b-2} w^2_s \,ds d\tau \\
 &\quad +\frac {c_1} 4 \int_0^t \int_0^{R^N} s^{-a-1} w^{b+1}\, ds d\tau + c_3 \int_0^t \int_0^{R^N} s^{\frac 2 N - a + \frac{N+2}{N} b} \,ds d\tau.
\end{align*}
Since \eqref{a} holds we have 
$\frac 2 N -a + \frac{N+2}{N}  b >-1$, 
and for some $c_4 >0$ we obtain
\begin{align*}
H_5 & \leq  \frac 1 2 H_4   +  \frac 1 4 H_2+ c_4 t.
\end{align*}
To estimate $H_6$ we apply Young's inequality:
\begin{align*}
\notag H_6 & =  2N(N-1) \int_0^t  \int_0^{R^N} s^{1- \frac 2N -a} w^{b-1} w_s\, ds d\tau \\
\notag & \leq \frac{c_1} 2  \int_0^t  \int_0^{R^N} s^{2- \frac 2N -a} w^{b-2} w^2_s \,ds d\tau  + c_5 \int_0^t  \int_0^{R^N} s^{- \frac 2N -a} w^{b} \, ds d\tau  \\
\notag& \leq   \frac{c_1} 2 \int_0^t  \int_0^{R^N} s^{2- \frac 2N -a} w^{b-2} w^2_s\, ds d\tau + \frac{c_1}4     \int_0^t  \int_0^{R^N} s^{-a -1} w^{b+1} \,ds d\tau \\
\notag &\quad + \bar{c}_5 \int_0^t  \int_0^{R^N} s^{- \frac 2N -a + \frac {N-2}{N} b}\,ds d\tau \\
& \leq  \frac 1 2 H_4 + \frac 1 4 H_2  + c_6 t, \ \text{for all} \ t\in(0,T_{max}), 
\end{align*}
with $c_5, \ \bar{c}_5, \ c_6 >0$ and by \eqref{a}:   $- \frac 2N -a + \frac {N-2}{N} b >-1$.
\end{proof}

In order to estimate the term $H_7$ in \eqref{s^-aw^b bis} we prove the following lemma.
\begin{lemma}\label{lemma H_7}
Let $N\geq 3$, $R>0$ and $H_7$ be as in  \eqref{s^-aw^b bis}.\\
$\diamond$ If $k=2$ and $u_0$ satisfies \eqref{u_0}, then there exists a constant $\mu_0>0$ 
such that for all $\mu \in (0,\mu_0)$ one can find $a>1$ and $b \in (0,1)$ 
fulfilling \eqref{a} and
\begin{equation}\label{H_7 k=2}
 H_7 \leq  \frac 1 4 H_2.
\end{equation}
$\diamond$ If $k\in \bigl(1,\, \min\bigl\{2,1+\frac{(N-2)^2}{4}\bigr\}\bigr)$, then for all $\mu>0$ one can find $a, b \in (0,1)$ 
fulfilling \eqref{a} and
\begin{equation}\label{H_7 1<k<2}
 H_7 \leq \frac 1 4  H_2 + \bar c_2 t, \  \, \bar c_2 >0, \ \text{for all}\ t\in(0,T_{max}).
\end{equation}
\end{lemma}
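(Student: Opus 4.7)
The plan is to use the $w_s\leq w/s$ inequality from Lemma~\ref{Fuest} to bound $w_s^k\leq (w/\sigma)^{k-1}w_s$; setting $F:=w/\sigma$ we have $w_s=F+\sigma F'$, so a single integration by parts in $\sigma$ gives the key estimate
\begin{equation}\label{IBPkey}
\int_0^s w_s^k(\sigma,\tau)\,d\sigma\;\leq\;\frac{1}{k}\,\frac{w(s,\tau)^k}{s^{k-1}}+\frac{k-1}{k}\int_0^s\frac{w(\sigma,\tau)^k}{\sigma^k}\,d\sigma,
\end{equation}
with vanishing boundary contribution at $\sigma=0$ because $w(\sigma,\tau)\sim \sigma w_s(0,\tau)$ there. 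Substituting \eqref{IBPkey} into the definition of $H_7$ in \eqref{s^-aw^b bis} and applying Fubini, together with the monotonicity $w(s,\tau)\geq w(\sigma,\tau)$ used to bound $w(s,\tau)^{b-1}\leq w(\sigma,\tau)^{b-1}$ (valid for $b<1$), converts $H_7$ into a finite combination of integrals of the form $\int_0^t\int_0^{R^N}s^{\gamma}w(s,\tau)^{b+k-1}\,ds\,d\tau$ whose $s$-powers depend on whether $a>1$ or $a<1$.

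For the case $k=2$ I would select $a>1$ and $b\in(0,1)$ satisfying $b>\tfrac{2}{N-2}$ together with \eqref{a}; such a choice is possible exactly when $N\geq 5$. The Fubini step uses $\int_\sigma^{R^N}s^{-a}\,ds\leq \sigma^{1-a}/(a-1)$, and both summands produced by \eqref{IBPkey} then contribute the same integrand $s^{-a-1}w^{b+1}$, which coincides, up to the constant $c_1$, with the model term $H_2$. A direct computation shows $H_7\leq \mu N\cdot\dfrac{a}{2(a-1)c_1}\,H_2$, so the choice $\mu_0:=\dfrac{(a-1)c_1}{2Na}>0$ yields \eqref{H_7 k=2} for every $\mu\in(0,\mu_0)$.

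For $1<k<\min\{2,\,1+(N-2)^2/4\}$ I would take the ansatz $a=b$ and pick $b\in\bigl(\sqrt{k-1},\,\min\{1,(N-2)/2\}\bigr)$; the interval is nonempty precisely under our hypotheses, and the constraint $a<\tfrac{N-2}{N}(b+1)$ reduces automatically to $b<(N-2)/2$. Now $a<1$, so Fubini only gives $\int_\sigma^{R^N}s^{-a}\,ds\leq R^{N(1-a)}/(1-a)$, leaving the two integrands $s^{1-a-k}w^{b+k-1}$ and $s^{-k}w^{b+k-1}$. I apply the parametric Young inequality with conjugate exponents $p=\tfrac{b+1}{b+k-1}$ and $q=\tfrac{b+1}{2-k}$ to split off a fraction of $s^{-a-1}w^{b+1}$ from each; the residual $s$-powers simplify, thanks to $a=b$, to $s^{0}$ in the first integrand and to $s^{(b^2-1)/(2-k)}$ in the second. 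The latter is integrable on $(0,R^N)$ exactly when $b^2>k-1$, which is built into our choice of $b$. Picking the Young parameter so that the aggregate coefficient in front of $\int\!\int s^{-a-1}w^{b+1}$ is at most $\tfrac{1}{4c_1}$ gives \eqref{H_7 1<k<2} with an explicit positive constant $\bar c_2$ depending on $\mu$, $k$, $N$, $R$, $a$ and $b$.

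The delicate point is the simultaneous fulfilment of (i) the constraint \eqref{a}, (ii) the bound $b\in(0,1)$ already required in Lemma~\ref{H_5 H_6} and in the integration by parts producing $H_4$, and (iii) the integrability near $\sigma=0$ of the Young residual $s^{(b^2-1)/(2-k)}$. The computation above shows that the ansatz $a=b$ collapses (i) and (iii) to the single window $\sqrt{k-1}<b<\min\{1,(N-2)/2\}$, and the nonemptiness of this window is what forces both thresholds $k<2$ and $k<1+(N-2)^2/4$ in the statement. The same reduction fails when $k=2$ (it would demand $b>1$), which is precisely why the critical case needs the extra restriction $N\geq 5$ together with a smallness condition on $\mu$.
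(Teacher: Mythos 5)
Your argument is correct and structurally the same as the paper's: identical case split, identical parameter windows ($a>1$ and $b>\frac{2}{N-2}$ with a smallness condition on $\mu$ for $k=2$, which forces $N\geq 5$ through \eqref{a}; $a=b\in\bigl(\sqrt{k-1},\,\min\{1,\frac{N-2}{2}\}\bigr)$ for $k<2$), the same Fubini step combined with the monotonicity of $w^{b-1}$, and the same Young--inequality endgame, whose integrability threshold $(a-1)\frac{a+1}{2-k}>-1$, i.e.\ $b^{2}>k-1$, is exactly what produces the hypothesis $k<\min\{2,1+\frac{(N-2)^2}{4}\}$. The only genuine deviation is your integration-by-parts identity for $\int_0^s w_s^k\,d\sigma$: the paper simply bounds $w_s^k\le (w/\sigma)^k$ pointwise via \eqref{w_s<w/s} after Fubini and lands directly on the single integrand $\sigma^{-k}w^{b+k-1}$ (resp.\ $\sigma^{-a-1}w^{b+1}$ when $k=2$, $a>1$), whereas you invoke \eqref{w_s<w/s} only once and pay for it with the extra term $\frac1k w^k(s)\,s^{1-k}$, which you then absorb correctly; in the case $k=2$ this detour even yields a marginally larger admissible $\mu_0$ (your $\frac{(a-1)c_1}{2Na}$ versus the paper's $\frac{(a-1)c_1}{4N}$), but otherwise buys nothing. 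Two harmless slips: the target coefficient in the last Young step should be $\frac{c_1}{4}$ rather than $\frac{1}{4c_1}$ (immaterial, since the Young parameter is free), and the vanishing of the boundary contribution at $\sigma=0$ in your identity should be justified by the second inequality in \eqref{w_s<w/s}, namely $w/\sigma\le w_s(0,t)$, which gives $\sigma\,(w/\sigma)^k\le \sigma\,w_s^k(0,t)\to 0$.
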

\begin{proof}
By Fubini's theorem we obtain
\begin{align*}
 H_7&= \mu N^{k-1}\int_0^t \int_0^{R^N} s^{-a} w^{b-1} \Bigl(\int_0^s w_s^k d\sigma \Bigr) \,ds d\tau\\
&= \mu N^{k-1}\int_0^t  \int_0^{R^N} \Big(\int_{\sigma}^{R^N} s^{-a} w^{b-1} ds\Big)  w_s^k (\sigma) \,d\sigma d\tau.
\end{align*}
Since $b\in(0,1)$ and $w_s\geq 0$, then $w^{b-1}(s)$ decreases in $s$, we can write
\begin{align*}
H_7&\leq \mu N^{k-1}\int_0^t  \int_0^{R^N} \Big(\int_{\sigma}^{R^N} s^{-a} ds\Big) w^{b-1}(\sigma) w_s^k (\sigma)\, d\sigma d\tau\\
& = \frac 1 {1-a}  \mu N^{k-1}\int_0^t  \int_0^{R^N}  \big(R^{N(1-a)} - \sigma^{1-a} \big) w^{b-1}(\sigma) w_s^k (\sigma) \,d\sigma d\tau.
\end{align*}
In the case $k=2$, $a>1$ we neglect the negative term $-\frac{R^N}{a-1}$ and use \eqref{w_s<w/s} to obtain
\begin{align*}
H_7 &\leq \frac { \mu N} {a-1}  \int_0^t  \int_0^{R^N} s^{1-a} w^{b-1}(s) w_s^2 (s)\, ds d\tau\\
&\leq \frac { \mu N} {a-1}  \int_0^t  \int_0^{R^N} s^{-a-1} w^{b+1}\,  ds d\tau \leq \frac 1 4 H_2
\end{align*}
if $0<\mu \leq \frac {a-1} {4N} c_1$. 
We note that, from the definition of $c_1$, for some sufficiently small $\mu_0>0$,  
one can find $a>1$ and $b \in (0,1)$ 
fulfilling both \eqref{a} and $\mu_0 \leq \frac {a-1} {4N} c_1$.\\


If $k\in \bigl(1,\, \min\bigl\{2,1+\frac{(N-2)^2}{4}\bigr\}\bigr)$, $a\in (0,1)$ we neglect the negative term $- \frac 1 {1-a} \sigma^{1-a}$ and arrive to
\begin{align*}
H_7 \leq \frac { \mu N^{k-1}} {1-a} R^{N(1-a)} \int_0^t  \int_0^{R^N}  w^{b-1}(s) w_s^k (s)\, ds d\tau.
\end{align*}
We now fix $b=a \in \bigl(\sqrt{k-1},\, \min\bigl\{1,\frac{N-2}{2}\bigr\}\bigr)$ fulfilling \eqref{a}. 
This is possible in view of the choice of $k$, because 
\eqref{a} with $b=a$ is equivalent to $a<\frac{N-2}{2}$. 
Thus we see that $(a-1) \frac{a+1}{2-k}>-1$, and then \eqref{w_s<w/s} and Young's inequality lead to 
\begin{align*}
H_7& \leq \frac { \mu N^{k-1}} {1-a} R^{N(1-a)} \int_0^t  \int_0^{R^N} s^{-k} w^{k +a-1} ds d\tau\\
&\leq\int_0^t \Big[  \Big( \int_0^{R^N} s^{-a-1} w^{a+1} ds \Big)^{\frac{k+a-1}{a+1}} \Big(\int_0^{R^N} s^{(a-1) \frac{a+1}{2-k}}\,ds \Big)^{\frac{2-k}{a+1}}\Big] \,d\tau \\
&\leq \frac {c_1} 4  \int_0^t  \int_0^{R^N} s^{-a-1} w^{a+1}\,ds d\tau + \bar c_1 \int_0^t \int_0^{R^N} s^{(a-1) \frac{a+1}{2-k}}\,ds d\tau\\
&=\frac {c_1} 4  \int_0^t  \int_0^{R^N} s^{-a-1} w^{a+1}\,ds d\tau + \bar c_2 t, \ for  \ all \ t\in(0,T_{max}),
\end{align*}
with some $\bar c_2>0$. 
Thus we obtain \eqref{H_7 1<k<2} with $b=a$. 
\end{proof}

\vspace{2mm}

Taking into account of Lemmata \ref{lemma s^-aw^b}, \ref{H_5 H_6} and \ref{lemma H_7}, we derive an integral inequality for the functional $y(t)= \int_0^{R^N}  s^{-a} w^b(s) ds$.

\begin{lemma}\label{lemma integral ineq}
Suppose Lemma  \ref{H_5 H_6} and Lemma \ref{lemma H_7} hold. Let $N\geq 3$, $R>0$, $m_0>0,$ $\mu >0$ and $k\in (1,2]$. Then there exist $a>0$, $b\in (0,1)$, $\delta>0$ and $C>0$ such that if $u_0(r)$  is nonnegative in $B_R(0)\subset {\mathbb R}^N$ with $\frac 1 {|\Omega|} \int_{\Omega} u_0 = m_0$, for the corresponding solution $(u,v)$ of \eqref{sys} in $\Omega \times (0,T_{max})$ and $w$ defined in \eqref{w}, it holds  
\begin{align}\label{integral ineq}
\notag &\int_0^{R^N} s^{-a} w^b(s,t)\, ds \\
&\geq  \int_0^{R^N} s^{-a} w_0^b(s) \,ds 
 +\delta \int_0^t\Big( \int_0^{R^N}s^{-a} w^{b}(s,\tau)\, ds\Big)^{\frac{b+1}b}\, d\tau - C t 
\end{align}
for all $t\in(0,T_{max})$.

\end{lemma}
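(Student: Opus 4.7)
The plan is to combine the lower bound \eqref{s^-aw^b bis} of Lemma~\ref{lemma s^-aw^b} with the absorption estimates of Lemmas~\ref{H_5 H_6} and \ref{lemma H_7}, and then to convert the surviving positive contribution $H_2$ into a super-linear power of $y(t):=\int_0^{R^N}s^{-a}w^b(s,t)\,ds$ by means of H\"older's inequality. Throughout, I would fix $a>0$ and $b\in(0,1)$ exactly as prescribed by Lemma~\ref{lemma H_7}: for $k\in\bigl(1,\,\min\bigl\{2,1+\tfrac{(N-2)^2}{4}\bigr\}\bigr)$ one takes $b=a\in\bigl(\sqrt{k-1},\min\{1,\tfrac{N-2}{2}\}\bigr)$, whereas for $k=2$ one takes $\mu<\mu_0$ and picks $a>1$, $b\in(0,1)$ satisfying \eqref{a} together with $\mu<\tfrac{a-1}{4N}c_1$, both feasible in the regimes already handled in Lemma~\ref{lemma H_7}.

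With those choices, summing \eqref{H_5}, \eqref{H_6} and the relevant bound for $H_7$ yields
$$H_5+H_6+H_7\leq H_4+\tfrac{3}{4}H_2+\tilde{C}t$$
for some $\tilde{C}\geq 0$. Substituting into \eqref{s^-aw^b bis} and dropping the non-negative term $H_3$ gives
$$\frac{1}{b}y(t)\geq \frac{1}{b}y(0)+\frac{1}{4}H_2-\tilde{C}t,\qquad t\in(0,T_{max}).$$

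The next step is the key one. I would exploit the pointwise factorization
$$s^{-a}w^b=\bigl(s^{-a-1}w^{b+1}\bigr)^{\frac{b}{b+1}}\cdot s^{\frac{b-a}{b+1}}$$
and apply H\"older's inequality with exponents $\tfrac{b+1}{b}$ and $b+1$ to obtain
$$y(t)\leq \Bigl(\int_0^{R^N}s^{-a-1}w^{b+1}\,ds\Bigr)^{\frac{b}{b+1}}\Bigl(\int_0^{R^N}s^{b-a}\,ds\Bigr)^{\frac{1}{b+1}}.$$
Condition \eqref{a} forces $a<\tfrac{N-2}{N}(b+1)<b+1$, so that $b-a>-1$ and the second integral equals $\tfrac{R^{N(b-a+1)}}{b-a+1}$. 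Raising to the power $\tfrac{b+1}{b}$ and integrating in time produces a constant $K>0$ with
$$H_2\geq c_1 K\int_0^t y(\tau)^{\frac{b+1}{b}}\,d\tau,$$
and inserting this into the previous display yields \eqref{integral ineq} with $\delta:=\tfrac{bc_1K}{4}$ and $C:=b\tilde{C}$.

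The main obstacle is not any hard analytical estimate but the careful verification that the parameters selected in Lemma~\ref{lemma H_7} simultaneously satisfy \eqref{a}, keep $c_1>0$, and make the H\"older exponent $b-a$ admissible. Since \eqref{a} already implies $a<b+1$ and since Lemma~\ref{lemma H_7} has already produced admissible pairs $(a,b)$ in each of the two parameter regimes stated in Theorem~\ref{BULinfty}, all constraints are mutually consistent and the integral inequality \eqref{integral ineq} follows.
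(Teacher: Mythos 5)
Your proposal is correct and follows essentially the same route as the paper's proof: substitute the estimates for $H_5$, $H_6$, $H_7$ from Lemmas \ref{H_5 H_6} and \ref{lemma H_7} into \eqref{s^-aw^b bis}, drop the nonnegative term $H_3$, and convert $H_2$ into $\delta\int_0^t y^{\frac{b+1}{b}}\,d\tau$ via exactly the same H\"older factorization (the paper writes the split as $s^{-a}w^b=s^{-a+\frac{b(a+1)}{b+1}}\bigl(s^{-a-1}w^{b+1}\bigr)^{\frac{b}{b+1}}$, which is identical to yours), using $b-a>-1$ guaranteed by \eqref{a}. The resulting constants $\delta=\tfrac14 b c_1 \bar c_4$ and $C$ match.
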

\begin{proof}
We analyse the two cases separately.\\
{\bf Case i)} Assume $k=2$, $1<a< \frac{N-2}N (b+1)$, $N\geq 5$, $0<\mu \leq \mu_0$. 
Thus $b\in(\frac 2 {N-2},1)$. \\
Substituting \eqref{H_5}, \eqref{H_6} and \eqref{H_7 k=2} in \eqref{s^-aw^b bis} and neglecting the positive term $H_3$, we see that
\begin{align*}
&\int_0^{R^N} s^{-a} w^b(s,t) \,ds \\
&\geq  \int_0^{R^N} s^{-a} w_0^b(s)\, ds 
 + \frac {b c_1} 4   \int_0^t \int_0^{R^N}s^{-a-1} w^{b+1} \,ds d\tau  - C t , \ \ \forall\, t\in(0,T_{max}).
\end{align*}

{\bf Case ii)} Assume $k\in \bigl(1,\,\min\bigl\{2,1+\frac{(N-2)^2}{4}\bigr\}\bigr)$, 
$b=a \in \bigl(\sqrt{k-1},\, \min\bigl\{1,\frac{N-2}{2}\bigr\}\bigr)$, $N\geq 3$, $\mu >0$.\\
Substituting \eqref{H_5}, \eqref{H_6} and \eqref{H_7 1<k<2} in \eqref{s^-aw^b bis} we obtain (with $b=a$)
\begin{align*}
\notag& \int_0^{R^N} s^{-a} w^b(s,t) \,ds \geq  \int_0^{R^N} s^{-a} w_0^b(s) \,ds + \frac {b c_1} 4   \int_0^t \int_0^{R^N}s^{-a-1} w^{b+1} \,ds d\tau \\
\notag &+ b  c_1 \int_0^t \int_0^{R^N} s^{-a} w^{b} w_s  \,ds d\tau - C t \\
&\geq  \int_0^{R^N} s^{-a} w_0^b(s)\, ds + \frac {b c_1} 4   \int_0^t \int_0^{R^N}s^{-a-1} w^{b+1} \,ds d\tau  - C t \ \ \forall\, t\in(0,T_{max}).
\end{align*}
In both cases i) and ii) we arrive at the following type inequality:
\begin{align} \label{s^-aw^b final}
\notag  &\int_0^{R^N} s^{-a} w^b(s,t) \,ds \\
&\geq  \int_0^{R^N} s^{-a} w_0^b(s) \,ds
 + \frac {b c_1} 4   \int_0^t \int_0^{R^N}s^{-a-1} w^{b+1}\, ds d\tau  - C t \ \ \forall\,t\in(0,T_{max}).
\end{align}
Now, by the H\"older inequality, we observe that
\begin{align*}
\int_0^{R^N} s^{-a} w^b \,ds 
&= \int_0^{R^N} s^{-a + \frac {b (a+1)}{b+1}} \big(s^{-a - 1} w^{b+1}\big)^{\frac b {b+1}}\,  ds \\
\notag &\leq \Big( \int_0^{R^N} s^{-a + b} \,ds \Big)^{\frac 1 {b+1}}  \Big( \int_0^{R^N} s^{-a - 1} w^{b+1} \,ds \Big)^{\frac b {b+1}} 
\end{align*}
from which we have
\begin{align}\label{s^-a-1 w^b+1}
& \int_0^{R^N} s^{-a - 1} w^{b+1} \,ds \geq \bar c_4 \Big(\int_0^{R^N} s^{-a} w^b \,ds\Big)^{\frac{b+1}b} 
\end{align}
with $\bar c_4=\Big( \frac {b+1 -a}{R^{N(b+1-a)}}\Big)^{\frac 1 b}$ and  $-a + b >-1$.\\
Replacing \eqref{s^-a-1 w^b+1} into \eqref{s^-aw^b final} we  arrive at \eqref{integral ineq} with $\delta = \frac 1 4 bc_1 \bar c_4$.
\end{proof}

\vskip.2cm

\begin{prth1.1}
{\rm By Lemma \ref{lemma integral ineq} with the aid of the Lemma \ref{lemma y(t)} and following the steps in the proof of Theorem 0.1 in \cite{W-2011},  we can conclude that $y(t)=\int_0^{R^N} s^{-a} w^b(s,t) ds$ blows up in finite time $T_{max} \leq \frac b {\delta \beta^{\frac 1 b}}$.\qed}
\end{prth1.1}


\section{Blow-up in $L^{p}$-norm}\label{BlowUp in L^p}
The aim of this section is to prove Theorem \ref{BULp}. To this end, first we prove the following lemma.
\begin{lem}\label{LemmaBoundedness u nabla v} 
Let $\Omega \subset\mathbb{R}^N,\ N\geq 3$ be a bounded and smooth domain. 
Let $(u,v)$ be a classical solution of system \eqref{sys}. 
If $\alpha$ satisfies \eqref{alpha} and if for some 
$p>\frac{N}{2} $
there exists 
$C>0$
such that 
\begin{align*}
\left\|u(\cdot,t)\right\|_{
L^{p}(\Omega)} 
\leq C, \quad \textrm{ for any } t \in (0,T_{max}),
\end{align*}
then, for some $\hat C>0$, 
\begin{align}\label{BoundednessU^infty}
\left\|u(\cdot,t)\right\|_{L^{\infty}(\Omega)} 
\leq {\hat C}, \quad \textrm{ for any } t \in (0,T_{max}).
\end{align}
\end{lem}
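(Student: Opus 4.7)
The plan is to start from the variation-of-constants representation
\begin{equation*}
u(\cdot,t) = e^{t\Delta} u_0 - \int_0^t e^{(t-s)\Delta}\nabla\cdot\bigl(u\,f(|\nabla v|^2)\nabla v\bigr)(\cdot,s)\,ds + \int_0^t e^{(t-s)\Delta} g(u(\cdot,s))\,ds
\end{equation*}
and take $L^\infty(\Omega)$-norms, then use the Neumann heat semigroup estimates of Lemma~\ref{Cao}. The first summand is bounded by $\|u_0\|_{L^\infty(\Omega)}$. For the chemotactic Duhamel integral, \eqref{etDelta nablaz} applied with target exponent $\infty$ and some intermediate exponent $q>N$ produces a time singularity of order $\tfrac12+\tfrac{N}{2q}<1$, hence integrable in $s$, provided $\|u\,f(|\nabla v|^2)\nabla v\|_{L^q(\Omega)}$ can be controlled uniformly in time.

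The key pointwise inequality, valid because $\alpha<\tfrac{N-2}{2(N-1)}<\tfrac12$, reads
\begin{equation*}
\bigl|f(|\nabla v|^2)\nabla v\bigr| = k_f\,\frac{|\nabla v|}{(1+|\nabla v|^2)^{\alpha}} \leq k_f\bigl(1+|\nabla v|^{\,1-2\alpha}\bigr).
\end{equation*}
I would couple this with elliptic $L^p$-regularity for the second equation $\Delta v=m(t)-u$ (under the Neumann condition and the normalization $\int_\Omega v=0$). Lemma~\ref{lemma int u < bar m} and the hypothesis $\|u(\cdot,t)\|_{L^p(\Omega)}\leq C$ provide a uniform $W^{2,p}(\Omega)$-bound for $v$, whence Sobolev embedding yields $\|\nabla v\|_{L^{r}(\Omega)}\leq \widetilde C$ with $r=\tfrac{Np}{N-p}$ when $\tfrac N2<p<N$ and $r=\infty$ when $p\geq N$; crucially, $r>N$ in either case. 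A H\"older split $\tfrac1{q_1}+\tfrac1{q_2}=\tfrac1q$ then bounds $\|u\,f\,\nabla v\|_{L^q(\Omega)}$ by a product of $\|u\|_{L^{q_1}(\Omega)}$ and $\|\nabla v\|_{L^{(1-2\alpha)q_2}(\Omega)}^{1-2\alpha}$.

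The source term is handled using $g(u)\leq \lambda u$ pointwise (since $\mu u^k\geq 0$), which combined with the positivity of $e^{t\Delta}$ and \eqref{etDeltaz} reduces the corresponding Duhamel contribution to a multiple of an $L^{q'}$-norm of $u$ already under control. Assembling these pieces yields an inequality of the form
\begin{equation*}
\|u(\cdot,t)\|_{L^\infty(\Omega)} \leq K_1+K_2\sup_{s\in(0,t)}\|u(\cdot,s)\|_{L^{q_1}(\Omega)},
\end{equation*}
with $q_1\leq p$ when $p\geq N$, or, when $\tfrac N2<p<N$, with a $q_1$ slightly larger than $p$; in the latter case a finite Moser--Alikakos bootstrap $p\mapsto p'\mapsto\cdots$ improves the exponent step by step until the range $p>N$ is reached and \eqref{BoundednessU^infty} follows. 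The main obstacle is the bookkeeping of the three exponents $q$, $q_1$, $q_2$: one must arrange $q>N$ while keeping $q_1$ below the current control norm for $u$ and $(1-2\alpha)q_2\leq r$; the assumption $p>\tfrac N2$ is exactly what makes this system of constraints solvable in a single step once the bootstrap has pushed the available norm past $N$.
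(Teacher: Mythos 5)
Your overall strategy --- Duhamel's formula, the semigroup estimates of Lemma~\ref{Cao}, the pointwise bound $k_f|\nabla v|(1+|\nabla v|^2)^{-\alpha}\le k_f|\nabla v|^{1-2\alpha}$ (valid since $2\alpha<1$ by \eqref{alpha}), and elliptic regularity for $v$ coming from the $L^p$ hypothesis --- is the same as the paper's. But one of your deviations is a genuine gap. You start the variation-of-constants formula at time $0$. The chemotactic integral survives this, because \eqref{etDelta nablaz} carries the factor $e^{-\mu_1(t-s)}$ unconditionally and $\tfrac12+\tfrac{N}{2q}<1$ makes $\int_0^\infty(1+\sigma^{-\frac12-\frac{N}{2q}})e^{-\mu_1\sigma}\,d\sigma$ finite. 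The source term does not: \eqref{etDeltaz} requires $\int_\Omega z=0$, so to apply it to $\lambda u(\cdot,s)$ you must split off the spatial mean, and the mean is invariant under $e^{(t-s)\Delta}$ with no decay at all. Its contribution to $\int_0^t e^{(t-s)\Delta}g(u)\,ds$ is of order $\lambda\bar m\,t/|\Omega|$, unbounded as $t\to\infty$. The lemma must deliver a single constant $\hat C$ on all of $(0,T_{max})$ even when $T_{max}=\infty$ --- this is exactly how it is invoked in Section~\ref{boundedness} to conclude $\sup_{t\in(0,\infty)}\|u(\cdot,t)\|_{L^\infty(\Omega)}<\infty$ --- so a bound growing linearly in $t$ does not suffice. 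The missing device is the paper's localization $t_0:=\max\{0,t-1\}$: restricting the representation formula to $[t_0,t]$, the bound $g(u)\le\max_{\xi\ge0}(\lambda\xi-\mu\xi^k)$ gives a source contribution at most $c_8(t-t_0)\le c_8$, and the initial term is handled by $L^1$--$L^\infty$ smoothing over the unit time step together with the mass bound of Lemma~\ref{lemma int u < bar m}.

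Your second deviation is the closing mechanism. You aim at $\|u\|_{L^\infty}\le K_1+K_2\sup_s\|u(\cdot,s)\|_{L^{q_1}}$ and, correctly sensing that for $\tfrac N2<p<N$ the constraints $q>N$, $q_1\le p$ and $(1-2\alpha)q_2\le\tfrac{Np}{N-p}$ are incompatible, you defer to a Moser--Alikakos bootstrap that you do not carry out. The paper avoids any iteration: after the H\"older split $\|u|\nabla v|^{1-2\alpha}\|_{L^q}\le\|u\|_{L^{q/(2\alpha)}}\|\nabla v\|_{L^q}^{1-2\alpha}$ with $N<q<\tfrac{Np}{N-p}$, it interpolates $\|u\|_{L^{q/(2\alpha)}}\le\|u\|_{L^\infty}^{\theta}\|u\|_{L^1}^{1-\theta}$ with $\theta=1-\tfrac{2\alpha}{q}\in(0,1)$, so the $L^p$ hypothesis is used only to control $\nabla v$ and everything closes in one step through $M\le C_1+C_7M^{\theta}$ for $M=\sup_{t\in[0,T]}\|u(\cdot,t)\|_{L^\infty(\Omega)}$. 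Your bootstrap could presumably be made to work after verifying the exponent arithmetic at each stage, but as written it is only an outline; once you adopt the localization above, the interpolation route closes the argument immediately and is what the paper does.
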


\begin{proof}
For any $t\in (0,T_{max})$, we set $t_0 := \max \{0, t-1\}$ and we consider the representation formula for $u$:
\begin{align*}
u(\cdot ,t) 
&= e^{(t-t_0)\Delta} u( \cdot, t_0) - k_f\int_{t_0}^{t} e^{(t-s)\Delta} \nabla \cdot \Big(u (\cdot, s)\frac{\nabla v(\cdot,s)}{(1+ |\nabla v(\cdot , t)|^2)^{\alpha}}\Big)\,ds \\
&\quad +  \int_{t_0}^t e^{(t-s) \Delta} \big( \lambda u (\cdot, s) - \mu u^k (\cdot, s) \big)\,ds =: u_1(\cdot,t)+u_2(\cdot,t)  + u_3(\cdot, t)
\end{align*}
and
\begin{align} \label{unorm}
 \| u(\cdot ,t) \|_{L^{\infty}} \leq \| u_1(\cdot,t) \|_{L^{\infty}(\Omega)} +\|u_2(\cdot,t)\|_{L^{\infty}(\Omega)}  + \| u_3(\cdot, t )\|_{L^{\infty}(\Omega)} .
\end{align}
We have
\begin{equation}\label{u_1}
\begin{split}                                                                                                                                                                                                                                                                                                                                                                                                                                                                                                                                                                                                                                                                                                                                                                                                                                                                                                                                                                                                                                                                                                                                                                                                                                                                                                                                                                                                                                                                                                                                                                                                                                                                                                                                                                                                                                                                                                                                                                                                                                                                                                                                                                                                                                                                                                                                                                                                                                                                                                                                                                                                                                                                                                                                                                                                                                                                                                                                                                                                                                                                     \lVert u_1 (\cdot,t) \rVert _{L^{\infty}(\Omega)} \leq \max \{\lVert u_0 \rVert_{L^{\infty}(\Omega)}, 2 \bar m k_1\} =:\tilde C_1,
\end{split}
\end{equation}
with $k_1>0$ and $\bar m$ defined in \eqref{bar m}. 
In fact, if $t\leq 1$, then $t_0=0$ and hence the maximum principle yields $u_1(\cdot, t) \leq \| u_0\|_{L^{\infty}(\Omega)}$. 
If $t>1$, then $t-t_0=1$ and from \eqref{bar m} and \eqref{etDeltaz} with ${\rm p}=\infty$ and $q=1$, we deduce that $\lVert u_1(\cdot,t)\rVert_{L^{\infty}(\Omega)} \leq k_1 [1+ (t-t_0)^{-\frac N 2}] e^{-\mu_1(t-t_0)} \lVert u(\cdot,t_0) \rVert_{L^1(\Omega)} \leq 2 \bar m k_1$.
\vskip.2cm
We next use \eqref{etDelta nablaz} with ${\rm p}=\infty$, which leads to
\begin{align}\label{u_2} 
& \|u_2 (\cdot, t) \|_{L^{\infty}(\Omega)} \\
\notag & \leq k_2 k_f  \int_{t_0}^{t} ( 1 + (t-s)^{-\frac 1 2 - \frac{N}{2q} }) e^{-\mu_1  (t-s)} \left\|u (\cdot, s)\frac{ \nabla v(\cdot,s)}{(1+|\nabla v|^2)^{\alpha}}  \right\|_{L^q(\Omega)} \,ds \\
\notag &\leq k_2 k_f \int_{t_0}^{t} ( 1 + (t-s)^{-\frac 1 2 - \frac{N}{2q} }) e^{-\mu_1  (t-s)} \|u (\cdot, s) |\nabla v|^{1- 2\alpha}  \|_{L^q(\Omega)} \,ds,
\end{align}
because $\frac {|\nabla v|}{(1+|\nabla v|^2)^{\alpha}}\leq |\nabla v|^{1-2\alpha}$.\\
Here, we may assume that 
$\frac N 2<p<N$,
and then 
we can fix $N< q < \frac{N p}{N-p}=p^*$. Since $2 \alpha < 1$, by H$\ddot{{\rm o}}$lder's inequality, we can estimate the last term in \eqref{u_2} as
\begin{align*}
& \|u (\cdot, s)|\nabla v(\cdot,s)|^{1-2\alpha}  \|_{L^q(\Omega)}\\
&\le 
\|u (\cdot, s)\|_{L^{\frac{q}{2\alpha}}(\Omega)}\|\nabla v(\cdot,s)  \|^{1-2\alpha}_{L^q(\Omega)}\\
&\le C_2\|u (\cdot, s)\|_{L^{\frac q {2\alpha}}(\Omega)} \|\nabla v(\cdot,s)\|^{1-2\alpha}_{L^{p^*}(\Omega)}\quad 
{\rm for\ all}\ s \in (0, \Tmax),
\end{align*}
for some $C_2>0$. 
The Sobolev embedding theorem and elliptic regularity theory 
for the second equation in \eqref{sys} tell us that 
$\|v(\cdot,s)\|_{W^{1,p^*}(\Omega)}
\leq C_3\|v(\cdot,s)\|_{W^{2,p}(\Omega)}
\le C_4$ with some $C_3, C_4>0$. 
Thus again by H$\ddot{{\rm o}}$lder's inequality, the definition of $\bar m $ and interpolation's inequality, we obtain
\begin{align*}
\|u (\cdot, s)|\nabla v(\cdot,s)|^{1-2\alpha}  \|_{L^q(\Omega)} 
&\le 
C_5\|u (\cdot, s)\|_{L^{\frac q{2\alpha}}(\Omega)}\\
&\le C_{5}\|u (\cdot, s)\|^{\theta}_{L^{\infty}(\Omega)} \|u (\cdot, s)\|^{1-\theta}_{L^1(\Omega)}\\
&\le C_{6}\|u (\cdot, s)\|^{\theta}_{L^{\infty}(\Omega)}\quad 
{\rm for\ all}\ s \in (0, \Tmax),
\end{align*}
with $\theta := 1 - \frac{2\alpha}{q} \in (0,1)$, $C_5:=C_2C_4$ and $C_{6}:=C_5 {\bar m}^{1-\theta}$. 
Hence, combining this estimate and \eqref{u_2}, we infer
\begin{align*}
\|u_2 (\cdot, t) \|_{L^{\infty}(\Omega)}
\leq C_{6}k_2  \int_{t_0}^{t} ( 1 +  (t-s)^{-\frac 1 2 - \frac{N}{2q} }) e^{-\mu_1  (t-s)} \|u (\cdot, s)\|_{L^{\infty}(\Omega)}^{ \theta}\,ds.
\end{align*}
Now fix any $T \in (0, T_{max})$. 
Then, since $t-t_0\leq 1$, we have
\begin{align}\label{u_21}
\|u_2 (\cdot, t) \|_{L^{\infty}(\Omega)}
&\leq C_{6}k_2   \int_{t_0}^{t} ( 1 +  (t-s)^{-\frac 1 2 - \frac{N}{2q} } e^{-\mu_1  (t-s)}) \,ds \cdot \sup_{t \in [0, T]} \|u (\cdot, t)\|_{L^{\infty}(\Omega)}^{ \theta}\notag\\[6pt]
&\leq C_{7}\sup_{t \in [0, T]} \|u (\cdot, t)\|_{L^{\infty}(\Omega)}^{ \theta},
\end{align}
where $C_{7}:=C_{6}k_2\bigl(1+\mu_1^{\frac{N}{2q}-\frac{1}{2}}\int_0^\infty r^{-\frac 1 2 - \frac{N}{2q}} e^{-r}\,dr\bigr)>0$ is finite, 
because $\frac{1}{2}+\frac{N}{2q}<1$  (i.e., $q>N$). \\


Now we prove that there exists a constant $c_8$ such that $\| u_3\| \leq c_8.$ In fact we observe that $g(u)= \lambda u - \mu u^k \leq g(\tilde u) := c_8$, with $\tilde u= \big( \frac {\lambda}{\mu} \big)^{\frac 1 {k-1}}$
\begin{align} \label{u_3}
\notag \|u_3(\cdot,t) \|_{L^{\infty}(\Omega)} 
&= \int_{t_0}^t \| e^{(t-s) \Delta} \big[ \lambda u (\cdot, s) - \mu u^k (\cdot, s) \big]\|_{L^{\infty}(\Omega)}\,ds \\[6pt]
&\leq  \int_{t_0}^t \| c_8  e^{(t-s) \Delta} \|_ {L^{\infty}(\Omega)} \,ds \leq c_8 (t-t_0) \leq c_8.
\end{align}

\vskip.2cm
Plugging \eqref{u_1}, \eqref{u_21} and \eqref{u_3}
into \eqref{unorm},	we see that
\begin{align}\label{u final} 
\| u(\cdot, t) \|_{L^{\infty}} \le C_1+C_{7} \sup_{t \in [0, T]} \|u (\cdot, t)\|_{L^{\infty}(\Omega)}^{ \theta},
\end{align}
with $C_1=\tilde C_1 + c_8$.

The inequality \eqref{u final} implies
\begin{align*}
\sup_{t \in [0, T]}\|u(\cdot, t)\|_{L^\infty(\Omega)} 
&\le C_{1}+C_{7} \Big(\sup_{t \in [0, T]} \|u (\cdot, t)\|_{L^{\infty}(\Omega)}\Big)^{\theta}\quad {\rm for\ all}\ T \in (0, T_{max}).
\end{align*} 
From this inequality with $\theta \in (0,1)$, we arrive at \eqref{BoundednessU^infty}.
\end{proof}

\vspace{2mm}

\begin{prth1.2}
\rm{Since Theorem~\ref{BULinfty} holds, the unique local classical solution of \eqref{sys} blows up at $t=T_{max}$ in the sense of \eqref{blowupinfty}, that is,  
\[
\limsup_{t \nearrow \Tmax}\| u(\cdot , t) \|_{L^{\infty}(\Omega)}= \infty.
\]
We prove that it blows up also in $L^p$-norm by contradiction.\\ 
In fact, if one supposes that there exist $p> \frac {N}{2} $ and $C>0$ 
such that 
\begin{align*}
\|u(\cdot, t)\|_{L^{p}(\Omega)} \leq C,\quad 
{\rm for\ all}\ t \in (0, T_{max}),
\end{align*} 
then, from Lemma \ref{LemmaBoundedness u nabla v}, it would exist 
$\hat C>0$ such that 
\begin{equation*} 
\lVert u(\cdot,t)\rVert _{L^\infty(\Omega)}\leq \hat C,\quad
{\rm for\ all}\ t \in (0, T_{max}),
\end{equation*}
which contradics \eqref{blowupinfty}. Thus, 
if $u$ blows up in $L^{\infty}$-norm, 
then $u$ blows up also in $L^p$-norm 
for all $p>\frac{N}{2}$}. \qed
\end{prth1.2}


\section{Lower bound of the blow-up time $\Tmax$} \label{lower bound}
Throughout this section we assume that Theorem \ref{BULp} holds.\\

We want to obtain a safe interval of existence of the solution of \eqref{sys}  $[0,T]$, with $T$ a lower bound of the blow-up time $T_{max}$. To this end, first we construct a first order differential inequality for $\Psi$ defined in \eqref{Psi} and by integration we get the lower bound.

\begin{prth1.3} 
{\rm
By differentiating  \eqref{Psi} we have
\begin{align}
 \label{Psi'}
\Psi'(t)
&= \int_\Omega u^{p -1} \Delta u \,dx
- \int_\Omega u^{p -1}\nabla\cdot (u \nabla v f(|\nabla v|^2 ) \, dx + \lambda \int_{\Omega} u^p \,dx - \mu \int_{\Omega} u^{p+k-1} \,dx\\
& \notag=:\mathcal  J_1+  \mathcal J_2 + \mathcal J_3+\mathcal J_4
\end{align}
with
\begin{align}
 \label{J1}
&\mathcal  J_1 = \int_\Omega u^{p -1} \Delta u \,dx\\ 
\notag&=  \int_\Omega \nabla \cdot\big( u^{p -1}\nabla u\big) \,dx - (p-1) \int_{\Omega}  u^{p-2} | \nabla u|^2 \,dx\\ 
\notag&  =  - \frac{4(p-1)}{p^2} \int_{\Omega}  | \nabla u^{\frac p 2}|^2 \,dx.
\end{align}
In the second term of \eqref{Psi'}, integrating by parts and using the boundary conditions in \eqref{sys}, for all $t\in [0,T_{max})$  we obtain
\begin{align}\label{J2}
&\mathcal  J_2 =- \int_{\Omega} u^{p -1}\nabla\cdot (u \nabla v f(|\nabla v|^2 ) \, dx \\
\notag&= (p-1) \int_{\Omega}  f(|\nabla v|^2)    u^{p -1} \nabla u \cdot \nabla v\,dx\\
\notag&=\frac{p-1}{p}  \int_{\Omega} \nabla u^p \cdot \nabla v f (|\nabla v|^2)\, dx \\
\notag&= -\frac{p-1}{p} \int_{\Omega} u^p \nabla \cdot[ \nabla v f (|\nabla v|^2)] \,dx\\
\notag&= -\frac{p-1}{p} \int_{\Omega} u^p [\Delta v  f (|\nabla v|^2)] \,dx\\
\notag&- \frac{p-1}{p} \int_{\Omega} u^p f'(|\nabla v|^2) \nabla v \cdot \nabla (|\nabla v|^2) \,dx.
\end{align}
Using the second equation of \eqref{sys} and taking into account that $f(\xi)= k_f(1+\xi)^{-\alpha}$, $  f'(\xi )=  -\alpha k_f(1+\xi)^{-\alpha-1}$ in \eqref{J2}, we have
\begin{align}
 \label{J2 bis}
\mathcal J_2  &= - k_f\frac{p-1}{p} \int_{\Omega} u^p \frac{m(t) - u}{(1+ |\nabla v|^2)^{\alpha}}   \,dx \\
\notag &\quad + \alpha k_f \frac{p-1}{p} \int_{\Omega}u^p \frac{ \nabla v \cdot \nabla (|\nabla v|^2) }{(1 + |\nabla v|^2)^{\alpha + 1}}\,dx\\[2pt]
\notag & \leq k_f\frac{p-1}{p} \int_{\Omega}  u^{p +1}  \,dx  +  \alpha k_f  \frac{p-1}{p} \int_{\Omega}  u^{p} \frac{\nabla v \cdot \nabla (|\nabla v|^2)}{(1+ |\nabla v|^2)^{\alpha +1}} \,dx,
\end{align}
where we dropped the negative term $- k_f\frac{p-1}{p} \int_{\Omega} u^p \frac{m(t) }{(1+ |\nabla v|^2)^{\alpha}}   dx$ and used the inequality $\frac{1}{(1+ |\nabla v|^2)^{\alpha} }\leq 1$ as $\alpha>0$. \\
In order to estimate the second term of \eqref{J2 bis} we recall the radially symmetric setting to obtain (with $\omega_N$ the surface area of the unit sphere in $N$ dimension)
\begin{align*}
\int_{\Omega}  u^{p} \frac{\nabla v \cdot \nabla (|\nabla v|^2)}{(1+ |\nabla v|^2)^{\alpha +1}} \,dx 
&= \omega_N \int_0^R u^p \frac{Nv_r(v^2_r)_r}{(1+ v^2_r)^{\alpha +1}} r^{N-1} \,dr\\
& =2N\omega_N \int_0^R u^p \frac{v^2_r v_{rr}}{(1+ v^2_r)^{\alpha +1}} r^{N-1} \,dr,
\end{align*}
which together with $v_{rr}= \frac{m(t)}{N} - u + \frac{N-1} {r^N} \int_0^r \rho^{N-1} u \ d  \rho $ implies
\begin{align}
 \label{J2 4}
&\int_{\Omega}  u^{p} \frac{\nabla v \cdot \nabla (|\nabla v|^2)}{(1+ |\nabla v|^2)^{\alpha +1}} \,dx\\
\notag&= 2m(t) \omega_N \int_0^R u^p \frac{v^2_r }{(1+ v^2_r)^{\alpha +1}} r^{N-1} \,dr\\
\notag&\quad - 2N\omega_N \int_0^R u^{p+1}  \frac{v^2_r}{(1+ v^2_r)^{\alpha +1}} r^{N-1}\, dr \\
\notag & \quad + 2N(N-1) \omega_N \int_0^R u^p \frac{v^2_r }{(1+ v^2_r)^{\alpha +1}}  \frac 1 r \Big(\int_0^r \rho^{N-1} u \,d \rho \Big) \,dr \\
\notag& \leq 2\frac{\bar{m}}{|\Omega|}\omega_N \int_0^R u^p r^{N-1} \,dr  + 2N(N-1) \omega_N \int_0^R u^p  \frac 1 r \Big(\int_0^r \rho^{N-1} u \,d \rho \Big) \,dr,
\end{align}
where we used \eqref{int u < bar m}, we dropped the negative term $- 2N\omega_N \int_0^R u^{p+1}  \frac{v^2_r}{(1+ v^2_r)^{\alpha +1}} r^{N-1}\, dr$ and finally we used the inequality $\frac{v^2_r}{(1+ v^2_r)^{\alpha + 1} }\leq 1.$ \\
In the second term of \eqref {J2 4}, H$\ddot{{\rm o}}$lder's inequality yelds that for all $\epsilon >0$ there exists $c= c(\epsilon, N, p)$ such that
\begin{align}
 \label{J2 5}
& \omega_N \int_0^R u^p  \frac 1 r  \Big(\int_0^r \rho^{N-1} u \,d \rho \Big) dr \\
 \notag& \leq  \omega_N \int_0^R u^p  \frac 1 r  \Big(\int_0^r \rho^{N-1} \,d \rho \Big)^{\frac {p}{p+1}}\Big( \int_0^r u^{p+1} \rho^{N-1} \,d \rho\Big)^{\frac 1{p+1}} dr\\
 \notag& \leq \Big(\frac 1 {N}\Big)^{\frac p {p+1}} \Big(\int_{\Omega} u^{p+1} \,dx \Big)^{\frac 1 {p+1}} \omega^{\frac p {p+1}}_N \int_0^R u^p r^{\frac {Np}{p+1}-1}\, dr \\
 \notag& \leq \Big( \! \frac {1} N \! \Big)^{\frac p{p+1}} \! \Big( \! \int_{\Omega} u^{p+1} \,dx \! \Big)^{ \frac 1 {p+1}} \omega_N^{\frac p {p+1}}\! \Big(\! \int_0^R u^{p+1+\epsilon} r^{N-1} \,dr\!  \Big)^{\frac p {p+1+\epsilon}} \Big(\! \int_0^R  r^{\frac{\epsilon Np}{ p+1} - 1} dr\!  \Big)^{\frac{1+\epsilon}{p+1+\epsilon}}\\
\notag& = c \Big( \int_{\Omega} u^{p+1} \,dx  \Big)^{\frac 1 {p+1} } \Big(\int_{\Omega} u^{p+1+\epsilon} \,dx\Big)^{\frac{p}{p+1+ \epsilon}}.
\end{align}
Combining \eqref{J2 5} and \eqref{J2 4} with \eqref{J2 bis} we obtain
\begin{align}
 \label{I2 6}
 \mathcal J_2  &\leq 2 \alpha \frac{\bar{m}}{|\Omega|}  k_f\frac{p-1}{p}   \int_{\Omega} u^p\, dx  + k_f \frac{p-1}{p} \int_{\Omega}  u^{p +1}  \,dx \\
 \notag&\quad + 2 \alpha N(N-1) c k_f \;  \frac{p-1}{p} \Big( \int_{\Omega} u^{p+\,1}\, dx \Big)^{\frac 1 {p+1}}\Big( \int_{\Omega} u^{p+1+\epsilon} \,dx\Big)^{\frac {p}{p+1+\epsilon}} \\
 \notag& \leq \frac{ c_1}{p}  \int_{\Omega} u^p \,dx  + c_2  \int_{\Omega} u^{p +1} \, dx + c_3\Big( \int_{\Omega} u^{p+1+\epsilon}\, dx\Big)^{\frac {p+1}{p+1+\epsilon}}
 \end{align}
where, in the last term, we used Young's inequality 
 with $c_1 = 2 \alpha\frac{\bar{m}}{|\Omega|} k_f(p-1)  , \ \  c_2 =  k_f\frac{p-1}{p}+ 2 \alpha N(N-1) c k_f\,  \frac{p-1}{p(p+1)}, \ \ c_3= 2 \alpha N(N-1) c  k_f\;  \frac{p-1}{p+1} $. \\
Thanks to the Gagliardo--Nirenberg inequality \eqref{GN ineq}, with $\mathsf{p}= 2\frac{p+1}{p}, \   \mathsf{r} = \mathsf{q}= \mathsf{s}=2, \ a= \theta_0 := \frac {N}{2(p+1)} \in(0,1)$ for all $p> \frac N 2$, we see that
\begin{align}
 \label{u^p+1}
 & \int_{\Omega} u^{p+1} \,dx = \| u^{\frac p 2}\|_{L^{2\frac{p+1}{p}}(\Omega)}^{2\frac{p+1}{p}} \\
&  \notag  \leq C_{GN} \| \nabla u^{\frac p 2} \|_{L^2(\Omega)}^{ 2\frac{p+1}{p}\theta_0}  \| u^{\frac p 2}\|_{L^2(\Omega)}^{ 2\frac{p+1}{p}(1-\theta_0)}+ C_{GN}   \| u^{\frac p 2}\|_{L^2(\Omega)}^{ 2\frac{p+1}{p}}\\
&  \notag = C_{GN}  \Big( \int_{\Omega} |\nabla u^{\frac p 2}|^2 \,dx\Big) ^{ \frac{N}{2p}}  \Big(  \int_{\Omega} u^{ p}\, dx\Big) ^{ \frac{2(p+1) - N}{2p}}+  C_{GN}   \Big(\int_{\Omega} u^{p} \,dx\Big)^{\frac{p+1}p}.
  \end{align}
Applying Young's inequality at the first term of \eqref{u^p+1} we have
\begin{align}
 \label{u^p+1 bis}
 &\int_{\Omega} u^{p+1} \,dx  \leq  \frac{N}{2p} \epsilon_1 C_{GN}  \int_{\Omega}  |\nabla u^{\frac p 2}|^2  \,dx\\
 & \notag + C_{GN}  \frac{2p-N}{2p \epsilon_1^{\frac N{2p-N}}} \Big(\int_{\Omega}  u^p \,dx\Big)^{\frac {2(p+1)-N}{2p-N}} + C_{GN}  \Big(\int_{\Omega} u^{p} \,dx\Big)^{\frac{p+1}p}
 \end{align}
with $\epsilon_1 >0$ to be choose later on,  and also
\begin{align}\label{u^p+1+ epsilon}
&\Big(\int_{\Omega} u^{p+1+\epsilon}\, dx \Big)^{\frac {p+1}{p+1+\epsilon}} = \| u^{\frac p 2}\|^{2\frac{p+1}p}_{L^2\frac{p+1+\epsilon}{p}(\Omega)} \\
& \notag \leq  C_{GN} \| \nabla u^{\frac p 2} \|_{L^2(\Omega)}^{ 2\frac{p+1}{p}\theta_{\epsilon}}  \| u^{\frac p 2}\|_{L^2(\Omega)}^{ 2\frac{p+1}{p}(1-\theta_{\epsilon})}+ C_{GN}   \| u^{\frac p 2}\|_{L^2(\Omega)}^{ 2\frac{p+1}{p}}\\
& \notag  =  C_{GN}  \Big(  \int_{\Omega}  |\nabla u^{\frac p 2}|^2 \, dx \Big)^{\frac{p+1}{p}\theta_{\epsilon}}  \Big(   \int_{\Omega} u^p \,dx \Big)^{ \frac{p+1 }{p}(1-\theta_{\epsilon})}
 +  C_{GN}  \Big(\int_{\Omega} u^{p} \,dx\Big)^{\frac{p+1}p},
\end{align}
\vskip.2cm
with $\mathsf{p}= 2\frac {p+1}{p},\  \mathsf{r} = \mathsf{q}= \mathsf{s}=2, \ a= \theta_{\epsilon} := \frac {N(1+ \epsilon)}{2(p+1+\epsilon)} \in(0,1)$ for all $p> \frac N 2$ and sufficiently small $\epsilon >0$.\\
Now, in the first term of \eqref{u^p+1+ epsilon}, we apply Young's inequality to obtain
\begin{align}
 \label{u^p+1+ epsilon bis}
&  \Big(\int_{\Omega} u^{p+1+\epsilon} \,dx \Big)^{\frac {p+1}{p+1+\epsilon}}  \\
 \notag  & \leq  c_4 \int_{\Omega}  |\nabla u^{\frac p 2}|^2 \, dx + c_5\Big(\int_{\Omega}  u^p \,dx\Big)^{\gamma} +  C_{GN}  \Big(\int_{\Omega} u^{p}\, dx\Big)^{\frac{p+1}p},
\end{align}
\vskip.2cm
with 
\begin{align*}
&c_4:= \frac{N(1+\epsilon)(p+1) }{2p(p+1 + \epsilon)} \epsilon_2 C_{GN} , \\
&c_5:= C_{GN} \Big(\frac{2p (p+1+ \epsilon)- N(p+1) (1+\epsilon)}{2p(p+1+\epsilon)} \Big) \epsilon_2^{\frac{N(1+\epsilon)}{2(p+1+\epsilon) -N(1+\epsilon)}},\\
&\gamma :=\frac{2(p+1) - \frac{N(p+1)(1+\epsilon)}{p+1+\epsilon} }{2p-\frac{N(1+\epsilon)(p+1)}{p+1+\epsilon}}, \ \ \epsilon_2 >0.
\end{align*}
Note that we can fix $\epsilon>0$ such that $2p -N(1+\epsilon) >0$.\\
Plugging \eqref{u^p+1 bis} and \eqref{u^p+1+ epsilon bis} into \eqref{I2 6} leads to
\begin{align}
 \label{J2 final}
& \mathcal J_2  \leq C \int_{\Omega}  |\nabla u^{\frac p 2}|^2 \, dx + \frac{c_1}{p} \int_{\Omega} u^p \,dx + C_{GN}  \Big(\int_{\Omega}  u^p \,dx\Big)^{\frac {p+1}{p}} \\
&  \notag  +  \tilde c_1\Big(\int_{\Omega}  u^p dx\Big)^{\frac {2(p+1)-N}{2p-N}}+c_5 \Big(\int_{\Omega} u^{p} \,dx\Big)^{\gamma}  
 \end{align}
with $C:= c_3\cdot c_4, \ \ \tilde c_1:= C_{GN}  \frac{2p-N}{2p \epsilon_1^{\frac N{2p-N}}}c_2, \ \epsilon_1>0.$\\ 
Also we note that
\begin{equation}\label{J_3}
\mathcal J_3 = \lambda \int_{\Omega} u^p \,dx =B_1\Psi, \ \ \  B_1= \lambda p.
\end{equation}


Finally, combining \eqref{J2 final} with \eqref{Psi'} and \eqref{J1}, \eqref{J_3}, neglecting the negative term $\mathcal J_4$ and choosing $\epsilon_2$ such that the term containing $\int_{\Omega} |\nabla u^{\frac p 2}|^2 dx$ vanishes,  we have
\begin{equation}\label {Psi' final}
\Psi' \leq  B_1 \Psi + B_2 \Psi^{\frac {p+1}{p}} +B_3 \Psi^{\frac {2(p+1)-N}{2p-N}} + B_4 \Psi^{ \gamma},
\end{equation}
with $B_2:= p^{\frac 1 p} [p C_{GN} + c_1]$, $ B_3:= \tilde c_1 p^{\frac {2(p+1)-N}{2p-N}}$ and $B_4:= c_5 p^{\gamma}$ .\\
Integrating \eqref{Psi' final} from $0$ to $T_{max}$, we arrive at the desired lower bound \eqref{lower Tmax in Lp} with $\gamma_1 := \frac {p+1}{p}, \ \gamma_2:= \frac {2(p+1)-N}{2p-N}$.
}\qed 
\end{prth1.3}

\begin{prcor1.1}

\rm{ We reduce \eqref{Psi' final} so as to have an explicit expression of the lower bound $T$ of $T_{max}$. In fact, since $\Psi(t)$ blows up at time $T_{max}$, there exists a time $t_1 \in (0, T_{max})$ such that $\Psi(t) \geq \Psi_0$ for all $t\in (t_1, T_{max})$.
 Thus, taking into account that 
 \begin{align*}
1<  \gamma_1 <\gamma_2 <\gamma
\end{align*}
we have
 \begin{align} \label{Psi^sigma}
& \Psi  \leq \Psi^{\gamma} \Psi_0^{1-\gamma}, \\
& \notag \Psi^{\gamma_i}  \leq \Psi^{\gamma} \Psi_0^{\gamma_i - \gamma} ,\ \ \ i=1,2.
\end{align}
From \eqref{Psi' final} and \eqref{Psi^sigma} we arrive at
\begin{align}\label {Psi' final new}
\Psi' \leq \mathcal{A} \Psi^{\gamma}, \ \  \forall\,t\in (t_1, T_{max}),
\end{align}
with $\mathcal{A}:= B_1 \Psi_0^{1-\gamma} + B_2 \Psi_0^{ \gamma_1-\gamma} + B_3   \Psi_0^{\gamma_2 - \gamma} + B_4 $, and $\Psi_0$ in \eqref{Psi}.

Integrating \eqref{Psi' final new} from $t=0$ to $t= T_{max}$, we obtain 
\begin{align}\label {lower}
\frac 1 { (\gamma -1) \Psi_0^{\gamma-1}}= \int_{\Psi_0}^{\infty} \frac {d\eta}{\eta^{\gamma}} \leq \mathcal{A} \int_{t_1}^{T_{max}} d\tau \leq \mathcal{A} \int_{0}^{T_{max}} d \tau =\mathcal{A} T_{max}.
\end{align}
We conclude, by \eqref{lower}, that the solution of \eqref{sys} is bounded in $[0,T]$ with $T:= \frac 1 { \mathcal{A} (\gamma -1) \Psi_0^{\gamma-1}}.$
}\qed 
\end{prcor1.1}


\section{Global existence and boundedness}\label{boundedness}

The aim of this section is to prove Theorem \ref{GEB}. 
The proof is divided into two cases.

\subsection{Case 1. $\alpha>\frac{N-2}{2(N-1)}$ and $k>1$}
As in the proof of Lemma \ref{LemmaBoundedness u nabla v}, for any $t \in (0,T_{max})$, we set 
$t_0:=\max\{0, t-1\}$. From the representation formula for $u$ we can write
\begin{align*}
      u(\cdot,t) 
   &= e^{(t-t_0)\Delta}u(\cdot,t_0) -\int_{t_0}^t e^{(t-s)\Delta}\nabla\cdot\left[u(\cdot, s)f(|\nabla v(\cdot,s)|^2)\nabla v(\cdot,s)\right]\,ds
\\
   &\quad\ +\int_{t_0}^t e^{(t-s)\Delta} g(u)\,ds
    =: u_1(\cdot,t)+u_2(\cdot,t)+u_3(\cdot,t). 
\end{align*}
In view of \eqref{unorm} and \eqref{u_1} as well as \eqref{u final}  we have
\begin{align}\label{tomomi01}
      \left\|u(\cdot,t)\right\|_{L^\infty(\Omega)} 
   &\le c_1+ \left\|u_2(\cdot,t)\right\|_{L^\infty(\Omega)}.
\end{align}
Since the condition $\alpha>\frac{N-2}{2(N-1)}$ implies that 
$(1-2\alpha)N<\frac{N}{N-1}$, 
we can take $q \in \left[1,\frac{N}{N-1}\right)$ such that $q>(1-2\alpha)N$, 
and hence we pick $r>N$ satisfying $q>(1-2\alpha)r$. Then 
we see from the second equation in \eqref{sys} with mass estimate \eqref{int u < bar m} that 
\[
     \sup_{t \in (0,T_{max})} \|\nabla v(\cdot,t)\|_{L^q(\Omega)} \le c_2.
\]
Using \eqref{etDelta nablaz} with ${\rm p}=\infty$ and $q=r$ as in \eqref{u_2}, we deduce from the H\"older inequality that 
\begin{align*}
   & \left\|u_2(\cdot,t)\right\|_{L^\infty(\Omega)} \\
   &\le c_3\int_{t_0}^t (1+(t-s)^{-\frac{1}{2}-\frac{N}{2r}})e^{-\mu_1(t-s)}\|u(\cdot,s)|\nabla v(\cdot,s)|^{1-2\alpha}\|_{L^r(\Omega)}\,ds \\
   &\le c_3\int_{t_0}^t (1+(t-s)^{-\frac{1}{2}-\frac{N}{2r}})e^{-\mu_1(t-s)}\|u(\cdot,s)\|_{L^{\frac{qr}{q-(1-2\alpha)r}}(\Omega)}
\|\nabla v(\cdot,s)\|_{L^q(\Omega)}^{1-2\alpha} \,ds.
\end{align*}
Putting $a:=1-\frac{q-(1-2\alpha)r}{qr} \in (0,1)$ and recalling \eqref{int u < bar m} again, 
we note that 
\[
\|u(\cdot,s)\|_{L^{\frac{qr}{q-(1-2\alpha)r}}(\Omega)} \le \left\|u(\cdot,s)\right\|_{L^\infty(\Omega)}^a\left\|u(\cdot,s)\right\|_{L^1(\Omega)}^{1-a} \le c_4\left\|u(\cdot,s)\right\|_{L^\infty(\Omega)}^a,
\]
and hence,  
\begin{align*}
    \left\|u_2(\cdot,t)\right\|_{L^\infty(\Omega)}
   &\le c_2 c_3 c_4 \int_{t_0}^t (1+(t-s)^{-\frac{1}{2}-\frac{N}{2r}})e^{-\mu_1(t-s)}
     \|u(\cdot,s)\|_{L^{\infty}(\Omega)}^a \,ds.
\end{align*}
This together with \eqref{tomomi01} implies that for any $T \in (0,T_{max})$, 
\begin{align*}
   & \sup_{t \in (0,T)}\left\|u_2(\cdot,t)\right\|_{L^\infty(\Omega)} \\
   &\le c_1 + c_2 c_3 c_4 \sup_{t \in (0,T)}\|u(\cdot,t)\|_{L^{\infty}(\Omega)}^a\int_{t_0}^t (1+(t-s)^{-\frac{1}{2}-\frac{N}{2r}})e^{-\mu_1(t-s)}
      \,ds
\\
   &\le c_1 + c_5 \Bigl(\sup_{t \in (0,T)}\|u(\cdot,t)\|_{L^{\infty}(\Omega)}\Bigr)^a 
\end{align*}
and thereby we conclude that $T_{max}=\infty$ and $\|u(\cdot,t)\|_{L^{\infty}(\Omega)} 
\le c_6$ for all $t>0$. 
\hfill \qedsymbol

\subsection{Case 2. $\alpha>0$ and $k>2$ in the radial setting}
We will derive a uniform estimate for $\Psi(t):=\frac{1}{p}\|u(\cdot,t)\|_{L^p(\Omega)}^p$ 
defined in \eqref{Psi}.  
As in the proof of Theorem \ref{LB} in Section \ref{lower bound}, we have 
\begin{align*}
   \Psi'(t)
  &=\int_\Omega u^{p-1}\Delta u\,dx -\int_\Omega u^{p-1}\nabla\cdot(u f(|\nabla v|^2\nabla v ))\,dx
    +\lambda \int_\Omega u^p\,dx -\mu\int_\Omega u^{p+k-1}\,dx
\\
  &=: \mathcal{J}_1+\mathcal{J}_2+\mathcal{J}_3+\mathcal{J}_4.
\end{align*}
In view of \eqref{J1}, \eqref{J2 final} and \eqref{J_3} we see that
\begin{align*}
    \mathcal{J}_1&=-\frac{4(p-1)}{p^2}\int_\Omega |\nabla u^\frac{p}{2}|^2\,dx,
\\
    \mathcal{J}_2&\le c_1\ep_2 \int_\Omega |\nabla u^\frac{p}{2}|^2\,dx
     +c_2 \Psi(t) +c_3 \Psi^\frac{p+1}{p}(t)
     +c_4 \Psi^\frac{2(p+1)-N}{2p-N} (t)
     +c_5 \Psi^{\gamma}(t),
\\
    \mathcal{J}_3&=\lambda p \Psi(t)
\end{align*}
and the H\"older inequality yields
\[
    \mathcal{J}_4 \le - c_6 \Psi^\frac{p+k-1}{p}(t). 
\]
Choosing $\ep_2$ such that the term containing $\int_\Omega |\nabla u^\frac{p}{2}|^2\,dx$ vanishes 
and noting that $k>2$ implies $\frac{p+1}{p} \in (1,\frac{p+k-1}{p})$ and 
\[
  \frac{2(p+1)-N}{2p-N},\ \gamma \in \Bigl(1, \frac{p+k-1}{p}\Bigr)
\]
for sufficiently large $p$ because 
$\lim_{p \nearrow \infty}\frac{2(p+1)-N}{2p-N}\cdot \frac{p}{p+1}=1$ and $\lim_{p \nearrow \infty} 
\gamma \cdot \frac{p}{p+1}=1$, we can derive from Young's inequality that 
\begin{align*}
   \Psi'(t)
   \le c_7 \Psi(t) - c_8 \Psi^\frac{p+k-1}{p}(t) 
\end{align*}
and therefore ODI comparison yields uniform bound for $\Psi(t)$ with sufficiently large $p>\frac{N}{2}$. 
Consequently, Lemma \ref{LemmaBoundedness u nabla v}  proves that $T_{max}=\infty$ and $\|u(\cdot,t)\|_{L^{\infty}(\Omega)} 
\le c_9$ for all $t>0$.  
\hfill \qedsymbol

\section*{Acknowledgments}
M. Marras and S. Vernier-Piro are members of the Gruppo Nazionale per l'Analisi Matematica, la Probabilit$\grave{\rm a}$ e le loro Applicazioni (GNAMPA) of the Istituto Nazionale di Alta Matematica (INdAM). 

\subsection*{Financial disclosure}

M. Marras is partially supported by the research project {\em Analysis of PDEs in connection with real phenomena}, CUP F73C22001130007, funded by {Fondazione di Sardegna} (2022), by the research project: {\em Evolutive and stationary Partial Differential Equations with a focus on biomathematics},  funded by Fondazione di Sardegna (2019); by the grant PRIN n. PRIN-2017AYM8XW: {\em Non-linear Differential Problems via Variational, Topological and Set-valued Methods}
and by the grant   INDAM-GNAMPA Project, CUP E55F22000270001.\\
T. Yokota is partially supported by JSPS KAKENHI Grant Number JP21K03278.

\newpage

  \end{document}